\title{On the Submodule Structure of Hook Specht Modules in Characteristic $2$ II}
\author{Zain Ahmed Kapadia \\ Queen Mary, University of London, Mile End Road, London E1 4NS, UK}
\newtheorem{theorem}{Theorem}[section]
\newtheorem*{theorem*}{Theorem}
\newtheorem*{conjecture*}{Conjecture}
\newtheorem{lemma}[theorem]{Lemma}
\newtheorem*{lemma*}{Lemma}
\newtheorem*{proposition*}{Proposition}
\newtheorem{corollary}[theorem]{Corollary}
\newtheorem*{corollary*}{Corollary}
\theoremstyle{definition}
\newtheorem{definition}[theorem]{Definition}
\newtheorem*{definition*}{Definition}
\newtheorem{example}[theorem]{Example}
\newtheorem*{example*}{Example}
\theoremstyle{remark}
\newtheorem{remark}[theorem]{Remark}
\newtheorem*{remark*}{Remark}
\newtheorem*{statement*}{Statement}
\newcommand{\isom}{\cong} 
\begin{document}

\maketitle

\begin{abstract}
    We classify which $2$-part Young modules in characteristic $2$ are uniserial, and which hook Specht modules in characteristic $2$ are direct sums of uniserial summands. This is a continuation of the author's previous work \cite{kapadia2024submodule}.
\end{abstract}

\tableofcontents

\newpage
\section{Introduction}\label{Introduction}

Let $n$ be a positive integer and let $S_n$ be the symmetric group on $n$ letters. There are many important families of modules in the representation theory of the symmetric group. For example, the \emph{Young permutation modules} $M_{\mathbb{F}}^\lambda,$ which are permutation modules on the cosets of the Young subgroups $S_\lambda;$ \emph{Young modules} $Y_{\mathbb{F}}^\mu$ which are the indecomposable summands of $M_{\mathbb{F}}^\lambda;$ and \emph{Specht modules} $S_{\mathbb{F}}^\nu$ which, in characteristic $0,$ are the irreducible representations of $S_n.$ In characteristic $0,$ Specht modules coincide with Young modules, but in positive characteristic this is no longer the case, with neither being (in general) simple and some Specht modules even being decomposable. Despite this, there remains in positive characteristic an intimate relationship between the structures of Young modules and Specht modules.

For example, \cite{donkin1987schur} showed that Young modules have a filtration whose quotients are isomorphic to Specht modules, and \cite{DonkinGeranios20201} showed that if $\lambda$ is of the form $(a, m-1, m-2, \ldots, 2, 1^b)$ with $a \geq m, b \geq 1, a-m$ even and $b$ odd, then $S_2^\lambda$ decomposes into a direct sum of Young modules labelled by partitions $\mu$ of the form $(x + m, y + m -1, m-2, \ldots, 1)$ where $x$ and $y$ satisfy some conditions. 

This paper continues the author's previous work \cite{kapadia2024submodule} in studying the submodule structure of hook Specht modules in characteristic $2;$ that is, $S_2^\lambda$ where $\lambda$ is of the form $(n-r,1^r)$ for $0 \leq r \leq n-1.$ The key result was the classification of uniserial hook Specht modules in characteristic $2.$ It is known that if $n$ is odd and $n-2r - 1 \not\equiv 0 \mod 2^L,$ where $2^{L-1} \leq r < 2^L,$ then $S_2^\lambda$ is decomposable \cite[Theorems 4.1, 4.5]{murphy1980decomposability}. This paper takes results primarily from \cite{donkin1987schur}, \cite{DonkinGeranios20201} and \cite{fayers_2002_schur_subalgebras_II} to classify which hook Specht modules in characteristic $2$ are direct sums of uniserial summands. Along the way, we also prove a nice result which states which $2$-part Young modules $Y_2^\mu$ are uniserial.

We now briefly indicate the layout of this paper. \Cref{Background} contains background definitions and results regarding partition combinatorics, Specht modules, Young modules and James modules. \Cref{section Uniserial 2-part Young modules} gives a sufficient and necessary condition for a $2$-part Young module to be uniserial. \Cref{Hook Specht modules which are direct sums of uniserial summands} concludes the paper by classifying which hook Specht modules are direct sums of uniserial summands.

The author acknowledges his PhD supervisor, \href{https://webspace.maths.qmul.ac.uk/m.fayers/}{Dr Matthew Fayers}, for the ideas and guidance provided throughout every step of this paper; Stephen Donkin and Dr Haralampos Geranios for friendly and insightful discussions; and the \href{https://www.ukri.org/councils/epsrc/}{Engineering and Physical Sciences Research Council (EPSRC)} for funding this ongoing PhD project.

\newpage
\section{Background}\label{Background}

In this section we state introductory definitions and results, directing the reader to appropriate sources for more details where necessary.

Fix a non-negative integer $n.$ A \emph{partition} $\lambda$ of $n$ is a weakly decreasing sequence of non-negative integers $\lambda = (\lambda_1, \lambda_2, \ldots)$ such that $\sum_{i\geq 1}\lambda_i = n,$ which we denote by $\lambda \vdash n.$ We will typically write $\lambda$ without trailing zeroes, and in multiplicative notation. For example, the partition $(7,3,3,0, \ldots)$ will be denoted $(7,3^2).$ There is a partial order on the set of partitions of $n$ as follows: we say $\lambda \trianglerighteq \mu$ ($\lambda$ `\emph{dominates}' $\mu$) if and only if for all $j, \sum_{i = 1}^j \lambda_i \geq \sum_{i=1}^j \mu_i.$

In characteristic $0,$ the irreducible representations of $S_n,$ the \emph{Specht modules} $S_{\mathbb{F}}^\lambda,$ are labelled by partitions $\lambda$ of n, and \{$S_{\mathbb{F}}^\lambda \, | \, \lambda \vdash n$\} is a complete set of non-repeating irreducible representations for $S_n.$ These can be reduced modulo $p,$ for $p$ prime, to get Specht modules over fields of positive characteristic. These are, in general, no longer simple. We direct the reader to \cite{James1978} for more details on Specht modules.

For a partition $\lambda$ of a fixed $n,$ the \emph{Young subgroup} $S_\lambda \isom S_{\lambda_1} \times S_{\lambda_2} \times \cdots \times S_{\lambda_r} \leq S_n$ acts on disjoint subsets of $\{1, 2, \ldots, n\}.$ Over a field $\mathbb{F},$ there is a transitive permutation module $M_{\mathbb{F}}^\lambda$ on the cosets of $S_\lambda$ in $S_n.$ By using James' Submodule Theorem \cite[(4.8) p. 15]{James1978}, we get that \cite[p. 204]{erdmann2001young} there is a unique indecomposable summand of $M_{\mathbb{F}}^\lambda$ which contains $S_{\mathbb{F}}^\lambda,$ which we denote by $Y_{\mathbb{F}}^\lambda,$ and call it the \emph{Young module} associated to $\lambda.$ In particular, every indecomposable direct summand of $M_{\mathbb{F}}^\lambda$ is isomorphic to $Y_{\mathbb{F}}^\mu$ for some $\mu$ such that $\mu \trianglerighteq \lambda;$ precisely one summand of $M_{\mathbb{F}}^\lambda$ is isomorphic to $Y_{\mathbb{F}}^\lambda;$ and if two Young modules are isomorphic then they are labelled by the same partition. Furthermore, $Y_{\mathbb{F}}^\lambda$ is self-dual \cite[p.4]{elkin2020young}. We direct the reader to \cite{erdmann2001young} for more details on Young modules.

We say that a partition $\lambda$ is \emph{$p$-regular} if there is \emph{no} $i \in \mathbb{Z}$ such that $\lambda_i = \lambda_{i+1} = \cdots = \lambda_{i+p-1} > 0$, and we say that $\lambda$ is \emph{$p$-singular} if there is such an $i$. If the characteristic of $\mathbb{F}$ is $p > 0$ and $\lambda$ is $p$-regular, one can show that $S_{\mathbb{F}}^\lambda$ has a unique maximal submodule. We label the quotient by this maximal submodule as $D_{\mathbb{F}}^\lambda,$ sometimes called the \emph{James module} associated to $\lambda.$ One can show that $\{D_{\mathbb{F}}^\lambda \, | \, \lambda \vdash n, \lambda \text{ is $p$-regular}\}$ is a complete set of non-repeating irreducible modular representations for $S_n.$ Again, we direct the reader to \cite{James1978} for more details on James modules.

We may write $M^\lambda, Y^\lambda, S^\lambda$ or $D^\lambda$ if the ground field is clear from context, and/or if the result is true for arbitrary ground fields. If the choice of ground field is arbitrary up to the characteristic $p$ of $\mathbb{F},$ we may write $M_p^\lambda, Y_p^\lambda, S_p^\lambda$ and $D_p^\lambda.$

If, for $\lambda \vdash n,$ we have $\lambda_3 = 0,$ we say that $\lambda$ is a \emph{$2$-part partition}, and we call $S^\lambda$ (resp. $Y^\lambda$) a \emph{$2$-part Specht module} (resp. \emph{$2$-part Young module}). Similarly, if $\lambda \vdash n$ is of the form $(n-r, 1^r)$ for $0 \leq r \leq n-1,$ we say that $\lambda$ is a \emph{hook partition}, and we call $S^\lambda$ a \emph{hook Specht module}. Note that the partition $\lambda = (n)$ is considered both a $2$-part partition and a hook partition, and so $S^{(n)}$ is also a $2$-part Specht module and a hook Specht module. This paper will focus primarily on $2$-part Specht modules, $2$-part Young modules, hook Specht modules, and the relationship between them all.

\newpage
\section{Uniserial $2$-part Young modules}\label{section Uniserial 2-part Young modules}

It is known that odd hook Specht modules in characteristic $2$ decompose as a direct sum of Young modules labelled by $2$-part partitions \cite[Proposition 7.1.1]{DonkinGeranios20201}. To classify which hook Specht modules in characteristic $2$ are direct sums of uniserial summands, we first need to study which $2$-part Young modules in characteristic $2$ are uniserial. In this section we give a sufficient and necessary condition for this to be the case. We first state some definitions, and then refine a theorem classifying uniserial $2$-part Specht modules in characteristic $2$ from the author's previous paper. 

\begin{definition}\label{contains}
    Let $a$ and $b$ be non-negative integers. Let $a = \sum_{i\geq 0}a_ip^i$ and $b=\sum_{j\geq 0}b_jp^j$ be their $p$-adic expansions. We write $a \supseteq_p b$ \emph{``$a$ contains $b$''} if and only if $b_i \in \{0, a_i\}$ for every $i$. 
\end{definition}

\begin{definition}\cite[p. 525]{murphy1982submodule}
    For a non-negative integer $r$ and a prime $p,$ define $L_p(r)$ to be the smallest non-negative integer such that $r < p^{L_p(r)}.$
\end{definition}
One can think of $L_p(r)$ as the number of significant digits when $r$ is written in its $p$-adic expansion.

\begin{theorem}\cite[Theorem 3.13]{kapadia2024submodule}\label{2_part_uniserial_old}
    Let $\lambda = (\lambda_1, \lambda_2) \vdash n.$ If $\alpha := \lambda_1 - \lambda_2 + 1$ has at least two non-zero digits in its binary expansion, define $a := \nu_2(\alpha), b := \nu_2(\alpha + 2^{\nu_2(\alpha)}),$ and $ c := \nu_2(\alpha - 2^{\nu_2(\alpha)}).$ That is, $a$ is the first $1$ in the binary expansion of $\alpha,$ $b$ is the first $0$ after $a,$ and $c$ is the first $1$ after $a.$
   
    Then $S^\lambda_2$ is uniserial if and only if 
    \begin{enumerate}
        \item $\alpha$ is a power of two or;
        \item $\alpha$ is not a power of two and $c > b$ and $2^c > \lambda_2$ or;
        \item $\alpha$ is not a power of two and $c < b$ and $2^b + 2^c > \lambda_2.$
    \end{enumerate}
\end{theorem}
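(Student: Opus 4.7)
The plan is to prove this by combining three ingredients: the known indecomposability criteria for $2$-part Specht modules (Murphy), an explicit description of the composition factors of $S_2^{(\lambda_1,\lambda_2)}$ controlled by the binary expansion of $\alpha=\lambda_1-\lambda_2+1$, and the construction of explicit submodule filtrations via James homomorphisms from other Specht modules. Throughout, uniseriality is equivalent to saying the Specht module is indecomposable and every proper nonzero submodule is comparable under inclusion, so the task is to construct a complete chain of submodules, or to exhibit two incomparable submodules.

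First I would dispense with the easy case: if $\alpha=2^k$ is a power of $2$, then Murphy's work shows $S_2^{(\lambda_1,\lambda_2)}$ has at most two composition factors, so uniseriality is automatic once indecomposability is verified, and the criterion of \cite{murphy1980decomposability} gives indecomposability in this situation. Next, I would assume $\alpha$ has at least two binary digits and use the result identifying the composition factors of $S_2^{(\lambda_1,\lambda_2)}$: each ``piece'' of the binary expansion of $\alpha$ that lies within range (i.e.~is compatible with $\lambda_2$) contributes a composition factor $D_2^{(\lambda_1+d,\lambda_2-d)}$. The thresholds $2^c$ and $2^b+2^c$ in the statement are precisely the sizes of the next shifts $d$ that could introduce a new composition factor after the factor indexed by $a$. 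Hence the hypotheses $2^c>\lambda_2$ and $2^b+2^c>\lambda_2$ are exactly the conditions that forbid any ``competing'' submodule from appearing next to the canonical one.

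For the sufficiency direction, in case (2), the condition $c>b$ means the binary expansion of $\alpha$ has isolated blocks, and together with $2^c>\lambda_2$ this forces the submodule lattice to be a chain: one builds it inductively by pulling back submodules along James homomorphisms $S^{(\lambda_1+2^a,\lambda_2-2^a)}\to S^{(\lambda_1,\lambda_2)}$ and showing each pull-back is uniquely determined. Case (3) is similar but uses that two adjacent $1$'s at positions $a$ and $c=a+1$ together with $2^b+2^c>\lambda_2$ still prevent a second independent submodule from being generated. For necessity, I would show that when $2^c\le\lambda_2$ (resp.\ $2^b+2^c\le\lambda_2$), the Specht module at the ``next level'' is itself decomposable or admits two independent maps into $S_2^\lambda$; these give incomparable submodules, destroying uniseriality.

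The main obstacle I expect is the case analysis in the necessity direction: showing that when the thresholds fail, one can actually construct two incomparable nonzero submodules. This requires a careful combinatorial argument about tableau bases and Garnir-type relations, or alternatively an exploitation of the $\Hom$ computations $\hom(S^\mu,S^\lambda)$ for pairs of $2$-part partitions $\mu\trianglerighteq\lambda$, to exhibit the required maps. The ``if'' direction is comparatively cleaner because one works with a well-defined candidate chain and merely verifies it has no refinements, using the composition factor count as a bound on the length.
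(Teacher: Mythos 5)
You should first be aware that the paper does not prove this statement at all: it is imported verbatim from the author's earlier work \cite{kapadia2024submodule} (Theorem 3.13 there), and the only argument in the present paper touching it is the derivation of the reformulation \Cref{2_part_uniserial_new}, which takes \Cref{2_part_uniserial_old} as given. So there is no in-paper proof to measure your attempt against, and it has to be judged as a free-standing proof of the quoted result.

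Judged that way, it is a strategy outline with the decisive step missing. The organising idea of your plan is composition-factor bookkeeping: you assert that the thresholds $2^c$ and $2^b+2^c$ are ``precisely the sizes of the next shifts $d$ that could introduce a new composition factor,'' so that the hypotheses ``forbid any competing submodule.'' The first half of this is only loosely true: by the criterion $\alpha+2d\supseteq_2 d$ (\cite{kapadia2024submodule}, Corollary 3.10), the shift $d=2^a$ contributes a factor $D_2^{(\lambda_1+2^a,\lambda_2-2^a)}$ as soon as $2^a\le\lambda_2$, and in case (3) further shifts strictly below the threshold $2^b+2^c$ (for instance $d=2^a+2^c$ when it satisfies the criterion) also contribute, so uniserial instances of the theorem can have many composition factors; the thresholds mark the appearance of one \emph{particular} new factor, not of the first ``competitor.'' The second half is the real gap: uniseriality is a property of the submodule lattice, and two modules with identical composition factors and multiplicities can differ in whether they are uniserial, so no amount of counting factors proves either implication. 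Everything that would actually pin down the lattice --- ``pulling back submodules along James homomorphisms and showing each pull-back is uniquely determined'' for sufficiency, and ``a careful combinatorial argument about tableau bases and Garnir-type relations, or \ldots $\Hom$ computations'' for necessity --- is named but not carried out, and you concede as much in your final paragraph; that deferred content \emph{is} the theorem. A smaller point: the at-most-two-factor count when $\alpha$ is a power of two, and the indecomposability needed there (which for $\lambda_1>\lambda_2$ follows from $S_2^\lambda$ having simple head $D_2^\lambda$), come from James's analysis of $S^{(n-m,m)}$ rather than from \cite{murphy1980decomposability}, which concerns hook partitions.
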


We can reformulate the above theorem into something easier to work with:

\begin{corollary}\label{2_part_uniserial_new}
    Let $\lambda = (\lambda_1, \lambda_2) \vdash n, \alpha := \lambda_1 - \lambda_2 + 1, L := L(\lambda_2),$ and let $\nu := \nu_2(\alpha).$ Then the Specht module $S_2^{\lambda}$ is uniserial if and only if one of the following occurs:
    \begin{enumerate}
        \item $\alpha + 2^\nu \equiv 0 \mod 2^L;$ or
        \item $\alpha + 2^\nu \equiv 2^{L-1} \mod 2^L,$ and $2^{\nu+1} + 2^{L-1} > \lambda_2;$ or
        \item $\alpha - 2^\nu \equiv 0 \mod 2^L.$
    \end{enumerate}
\end{corollary}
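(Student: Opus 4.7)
The plan is to derive this from Theorem 3.13 by direct translation, tracking the effect of adding or subtracting $2^\nu$ on the binary expansion of $\alpha$. Two observations drive the argument. First, $L = L(\lambda_2)$ means $2^{L-1} \leq \lambda_2 < 2^L$, so for any non-negative integer $k$ we have $2^k > \lambda_2$ if and only if $k \geq L$. Second, if the lowest set bit of $\alpha$ is at position $\nu$, then $b = \nu_2(\alpha + 2^\nu)$ is where the carry triggered by adding $2^\nu$ finally lands (the first $0$ of $\alpha$ at a position strictly above $\nu$), while $c = \nu_2(\alpha - 2^\nu)$ is the next set bit of $\alpha$ above $\nu$. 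I would dispose of the case $\alpha = 2^\nu$ immediately: here $\alpha - 2^\nu = 0 \equiv 0 \pmod{2^L}$, so new condition (3) holds. Assuming now that $\alpha$ is not a power of $2$, the alternatives $c > b$ and $c < b$ in Theorem 3.13 correspond respectively to $\alpha_{\nu+1} = 0$ (with $b = \nu+1$) and $\alpha_{\nu+1} = 1$ (with $c = \nu+1$).

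In the case $c > b$, the old condition $2^c > \lambda_2$ is equivalent to $c \geq L$, which in turn is equivalent to $\alpha \bmod 2^L \in \{0, 2^\nu\}$: either $\nu \geq L$ and the bottom $L$ bits of $\alpha$ vanish automatically, or $\nu < L$ and the bottom $L$ bits equal exactly $2^\nu$ because $\alpha$ has no further set bit until position $c \geq L$. In either situation $\alpha - 2^\nu \equiv 0 \pmod{2^L}$, matching new condition (3); the reverse implication follows by running the same chain backwards.

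In the case $c < b$, I would split by where $b$ sits relative to $L$. If $b \geq L$, the bits of $\alpha$ from position $\nu$ through $L-1$ are all $1$, so the carry in $\alpha + 2^\nu$ pushes past position $L-1$, giving $\alpha + 2^\nu \equiv 0 \pmod{2^L}$, which is new condition (1); and $2^b \geq 2^L > \lambda_2$ makes the old inequality automatic. If $b = L-1$, the bottom $L$ bits of $\alpha + 2^\nu$ collapse to exactly $2^{L-1}$, giving the first clause of new condition (2), and the old inequality reads $2^{L-1} + 2^{\nu+1} > \lambda_2$, matching the second clause. If $b \leq L-2$, all three new conditions fail, since the bottom $L$ bits of $\alpha$ carry $1$s at positions $\nu, \nu+1, \ldots, b-1$ together with a $0$ at position $b \leq L-2$, matching none of the required patterns $0$, $2^\nu$, $2^{L-1} - 2^\nu$, or $2^L - 2^\nu$; and the old inequality also fails since $2^b + 2^c < 2^{b+1} \leq 2^{L-1} \leq \lambda_2$.

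The main obstacle is really just careful bookkeeping — tracking which bits of $\alpha$ are forced in each sub-case, and verifying that the boundary case $\nu \geq L$ collapses harmlessly (it only arises in the $b \geq L$ branches, where the conclusions are immediate). Once the sub-cases are laid out, the equivalence between the two formulations is mechanical, with no new combinatorics beyond what is already inherent in Theorem 3.13.
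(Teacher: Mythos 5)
Your proposal is correct and follows essentially the same route as the paper's proof of \Cref{2_part_uniserial_new}: a direct bit-by-bit translation of \Cref{2_part_uniserial_old} built on the equivalence $2^k > \lambda_2 \Leftrightarrow k \geq L$ and the reading of $b$ and $c$ as the first $0$ and the first $1$ of $\alpha$ above position $\nu$, merely reorganised as a single case split on the binary form of $\alpha$ rather than two separate passes. The one detail left implicit is that in the $c>b$ and $b=L-1$ branches you should also note that the two new conditions you do not match against cannot hold unless the old inequality already does (in the first branch either of them forces $b=\nu+1\geq L-1$ and hence $c\geq L$; in the second both are excluded because $\nu_2(\alpha+2^\nu)=L-1$ and $\nu_2(\alpha-2^\nu)=c<L$), so that the full disjunction really is equivalent to the single condition you identify there.
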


\begin{proof}
    First assume $S_2^\lambda$ is uniserial. Then by \Cref{2_part_uniserial_old}, we must be in one of three cases. Take the definitions of $a, b$ and $c$ as in the statement of the theorem.

    If $\alpha$ is a power of two, then $\alpha = 2^\nu,$ and so $\alpha - 2^\nu = 0,$ hence $\alpha - 2^\nu \equiv 0 \mod 2^L.$

    In the case that $\alpha$ is not a power of two, $c>b,$ and $2^c > \lambda_2,$ we have that $\nu_2(\alpha - 2^\nu) = \nu_2(\alpha - 2^a) = c.$ So $2^c | (\alpha - 2^\nu),$ and, as $2^c > \lambda_2,$ we have $2^c \geq 2^L,$ so $\alpha - 2^\nu \equiv 0 \mod 2^L.$ 

    In the case that $\alpha$ is not a power of two, $c<b$ and $2^b + 2^c > \lambda_2,$ we have that $\nu_2(\alpha + 2^\nu) = \nu_2(\alpha + 2^a) = b.$ If $b \geq L$ then $\alpha + 2^\nu \equiv 0 \mod 2^L.$ If, $b < L,$ we must have that $b = L-1,$ otherwise we would have $b \leq L-2,$ hence $2^b + 2^c < 2^b + 2^b = 2^{b+1} \leq 2^{L-1} \leq \lambda_2,$ a contradiction to the assumption that $2^b+2^c > \lambda_2.$ Hence we must have $b=L-1,$ and so $\alpha + 2^\nu \equiv 2^{L-1} \mod 2^L$ and $2^{\nu+1} + 2^{L-1} > \lambda_2,$ as $c = \nu+1.$

    Now for the converse direction. If $\alpha$ is a power of two, then $S_2^\lambda$ is uniserial. So assume now that $\alpha$ is not a power of two, and define $a := \nu_2(\alpha), b := \nu_2(\alpha + 2^a)$ and $c := \nu_2(\alpha - 2^a)$ as in the statement of \Cref{2_part_uniserial_old}.

    Assume $\alpha + 2^\nu \equiv 0 \mod 2^L.$  By definition, $2^b | (\alpha + 2^\nu),$ so $b \geq L.$ We also have that either $\nu+1 = b$ or $\nu+1 = c.$ In the case $\nu+1=b,$ we must have $c > b$ and so $2^c > 2^b \geq 2^L > \lambda_2,$ hence $S_2^\lambda$ is uniserial. In the case $\nu+1 = c,$ we have $c < b$ and $2^b + 2^c > 2^b \geq 2^L > \lambda_2,$ so $S_2^\lambda$ is uniserial.

    Now consider the case that $\alpha + 2^\nu \equiv 2^{L-1} \mod 2^L$ and $2^{\nu+1} + 2^{L-1} > \lambda_2.$ Again, by the definition of $b,$ we have $2^b \equiv 2^{L-1} \mod 2^L$ and hence $b = L-1.$ Again, we have that either $\nu+1 = b$ or $\nu+1 = c.$ In the case $\nu+1=b,$ we must have $c > b$ and so $2^c \geq 2^{b+1} = 2^{L} > \lambda_2,$ hence $S_2^\lambda$ is uniserial. In the case $\nu+1 = c,$ we have $c < b$ and $2^b + 2^c = 2^{L-1} + 2^{\nu+1} > \lambda_2$ by assumption, so $S_2^\lambda$ is uniserial.

    Now for the case that $\alpha - 2^\nu \equiv 0 \mod 2^L.$ By definition, $2^c |(\alpha - 2^\nu),$ so $c \geq L.$ We also have that either $\nu+1 = b$ or $\nu+1 = c.$ In the case that $\nu+1=b,$ we must have $c>b$ and so $2^c \geq 2^L > \lambda_2$ hence $S_2^\lambda$ is uniserial. In the case $\nu+1 = c,$ we have $c < b$ and $2^c + 2^b  > 2^c \geq 2^L > \lambda_2,$ so $S_2^\lambda$ is uniserial, as required.
\end{proof}

For partitions $\lambda,\mu$ of $n,$ we take $\nabla(\mu)$ to be the dual Weyl module and $L(\lambda)$ to be the simple module for the general linear groups, as introduced and used in \cite{martin1993schur} (where $\nabla(\mu)$ is called $M(\mu)$) and \cite{green2006polynomial} (where $\nabla(\mu)$ is called $D_{\mu}$). The precise definitions and constructions of these modules are not required for this paper.

\begin{definition}
    For $\lambda, \mu \vdash n,$ we write $[\nabla(\mu): L(\lambda)]$ for the composition multiplicity of $L(\lambda)$ inside of $\nabla(\mu).$
\end{definition}

We have the following theorem which states that Young modules have a filtration via Specht modules.

\begin{theorem}\label{Young module filtered by Specht SD}\cite[(2.6) p. 360]{donkin1987schur}
    Let $\lambda \vdash n$ be a partition and $Y_\mathbb{F}^\lambda$ be the Young module for $\lambda$ over a field $\mathbb{F}.$ Then $Y_\mathbb{F}^\lambda$ has a filtration $0 = Y_0 \leq Y_1 \leq \ldots \leq Y_s = Y_\mathbb{F}^\lambda$ for some $s,$ where $Y_i / Y_{i-1}$ is isomorphic to a direct sum of $[\nabla(\mu^i): L(\lambda)]$ many copies of $S_\mathbb{F}^{\mu^i},$ with the labelling chosen so that $i < j$ if $\mu^i \trianglelefteq \mu^j.$
\end{theorem}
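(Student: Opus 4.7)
The plan is to lift the question to polynomial representations of $GL_n(\mathbb{F})$, equivalently modules over the Schur algebra $S = S(n,n)$, and transport a good filtration back to $\mathbb{F}S_n$ via the Schur functor $f : S\text{-mod} \to \mathbb{F}S_n\text{-mod}$. The three pertinent properties of $f$ that I would establish (or cite) at the outset are: exactness, since $f$ is multiplication by an idempotent $e \in S$ corresponding to the weight $(1^n)$; the identification $f(\nabla(\mu)) \cong S_\mathbb{F}^\mu$ of dual Weyl modules with Specht modules, via the realisation of $\nabla(\mu)$ inside tensor space; and the identification $f(I(\lambda)) \cong Y_\mathbb{F}^\lambda$, where $I(\lambda)$ is the injective envelope of the simple module $L(\lambda)$ in $S\text{-mod}$. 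The last point follows because $f$ preserves indecomposability of injectives in the relevant block and $f(I(\lambda))$ is an indecomposable summand of a Young permutation module containing the appropriate Specht factor, hence must be $Y_\mathbb{F}^\lambda$ by its characterising property.

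The central input is that the Schur algebra $S$ is quasi-hereditary with respect to the dominance order on partitions of $n$, also established by Donkin. From this I would extract two facts. First, each $I(\lambda)$ admits a \emph{good filtration} $0 = V_0 \subset V_1 \subset \cdots \subset V_s = I(\lambda)$ with $V_i / V_{i-1} \cong \nabla(\mu^i)$, and this filtration can be arranged so that $i < j$ implies $\mu^i \trianglelefteq \mu^j$, using the vanishing of $\operatorname{Ext}^1_S(\nabla(\mu), \nabla(\mu'))$ whenever $\mu' \ntrianglerighteq \mu$. Second, a BGG-type reciprocity yields
\[
(I(\lambda) : \nabla(\mu)) \;=\; [\nabla(\mu) : L(\lambda)],
\]
proved by the standard argument comparing $\Hom_S(\Delta(\mu), I(\lambda))$ with the composition multiplicities, using the $\operatorname{Ext}$-orthogonality between standard and costandard modules of a quasi-hereditary algebra.

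Finally I would apply the exact functor $f$ term-by-term to the good filtration of $I(\lambda)$. Exactness gives an induced filtration of $Y_\mathbb{F}^\lambda = f(I(\lambda))$, property (ii) above identifies each quotient as a Specht module $S_\mathbb{F}^{\mu^i}$, and the multiplicities inherited from the good filtration are precisely $[\nabla(\mu^i) : L(\lambda)]$, with the dominance ordering transported from the $S$-module side. The main obstacle is pinning down the identification $f(I(\lambda)) \cong Y_\mathbb{F}^\lambda$ — that is, correctly matching indecomposable injective $S$-modules with Young modules of $\mathbb{F}S_n$ — as this requires more than the purely formal framework and depends on careful analysis of $f$ restricted to the injective modules. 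Once this and the quasi-heredity of $S$ are in place, the rest is the formal mechanism of transporting a filtration through an exact functor.
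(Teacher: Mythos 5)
The paper does not prove this statement at all: it is quoted verbatim from Donkin \cite[(2.6) p.~360]{donkin1987schur} and used as a black box. Your sketch is essentially the argument of that cited source --- realise $Y_\mathbb{F}^\lambda$ as $f(I(\lambda))$ for the exact Schur functor $f$, use quasi-heredity of the Schur algebra to get a good filtration of $I(\lambda)$ ordered compatibly with dominance via $\operatorname{Ext}^1$-vanishing, apply reciprocity $(I(\lambda):\nabla(\mu)) = [\nabla(\mu):L(\lambda)]$, and push the filtration through $f$ using $f(\nabla(\mu)) \cong S_\mathbb{F}^\mu$ --- so it is the standard route and is sound, with the one convention-dependent point (whether $f(\nabla(\mu))$ is the Specht module or its dual) being the only detail you would need to nail down beyond the identification $f(I(\lambda)) \cong Y_\mathbb{F}^\lambda$ that you already flag.
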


Fix some $i$ and write $\mu$ for $\mu^i$. As $Y_{\mathbb{F}}^\lambda$ is the unique direct summand of $M_{\mathbb{F}}^\lambda$ which contains $S_{\mathbb{F}}^\lambda,$ we can take $Y_1 = Y_1 / Y_0 = S_{\mathbb{F}}^\lambda.$ By the above theorem, we also have that $\lambda \trianglelefteq \mu$ for all $\mu.$ In the case that $\lambda$ is a $2$-part partition, this implies that $\mu$ is also a $2$-part partition. A recursive formula to calculate $[\nabla(\mu): L(\lambda)]$ in this case is given below.

\begin{theorem}\label{decomp number recursive formula}\cite[p. 321]{fayers_2002_schur_subalgebras_II}
    Let $\mu = (\mu_1, \mu_2)$ and $\lambda = (\lambda_1,\lambda_2)$ be $2$-part partitions of a positive integer $n.$ Define $\alpha := \mu_1 - \mu_2 + 1$ and $\beta := \lambda_1 - \lambda_2 + 1$ (note that this is different to the literature). Write $d(\alpha,\beta) := [\nabla(\mu) : L(\lambda)].$ Then we have the recursive formula:

    \[ d(\alpha,\beta) = \begin{cases} 
      d(\frac{\alpha+1}{2}, \frac{\beta+1}{2}) & \text{ if $\alpha$ and $\beta$ are odd and congruent mod $4$ } \\
      d(\frac{\alpha-1}{2}, \frac{\beta+1}{2}) & \text{ if $\alpha$ and $\beta$ are odd but not congruent mod $4$ } \\
      d(\frac{\alpha}{2}, \frac{\beta}{2}) & \text{ if $\alpha$ and $\beta$ are even and congruent mod $4$ } \\
      0 & \text{ if $\alpha$ and $\beta$ are even but not congruent mod $4$ } 
   \end{cases}
    \]
\end{theorem}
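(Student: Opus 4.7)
The plan is to interpret $d(\alpha,\beta)$ as an $SL_2$ decomposition number and then to use Steinberg's tensor product theorem together with the linkage principle to strip off the lowest $2$-adic digits of $\alpha-1$ and $\beta-1$.

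First, since $\nabla(\mu)$ and $L(\lambda)$ are polynomial $GL_2$-modules of the same homogeneous degree $n$, every composition factor of $\nabla(\mu)$ has both parts determined by its $SL_2$ highest weight via $\lambda_1 = (n+r)/2, \lambda_2 = (n-r)/2$. One then argues that $d(\alpha,\beta) = [\nabla(m):L(\ell)]$ computed over $SL_2$, where $m := \mu_1-\mu_2 = \alpha-1$ and $\ell := \lambda_1-\lambda_2 = \beta-1$. Since $\mu$ and $\lambda$ are partitions of the same $n$, the parities of $\alpha$ and $\beta$ automatically agree, which accounts for the absence of mixed-parity cases in the statement.

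Next, I would apply Steinberg's tensor product theorem in characteristic $2$: writing $\ell = \ell_0 + 2\ell'$ with $\ell_0\in\{0,1\}$, we have $L(\ell) \cong L(\ell_0)\otimes L(\ell')^{(1)}$, where $(1)$ denotes Frobenius twist. A companion filtration for $\nabla(m)$, obtained via the standard short exact sequences for $SL_2$-modules in characteristic $p$, expresses its composition factors in terms of $L(m_0)\otimes \nabla(m')^{(1)}$-type pieces. Matching these up with the Steinberg decomposition of $L(\ell)$ reduces $[\nabla(m):L(\ell)]$ to a decomposition number of strictly smaller weight.

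Now I would split into cases by the parities of $\alpha,\beta$ and their residues modulo $4$, observing that $\alpha$ is odd iff $m$ is even and that $\alpha\equiv\beta\pmod 4$ iff $m\equiv\ell\pmod 4$. Cases (1) and (3) correspond to the lowest two $2$-adic digits of $m$ and $\ell$ matching, so Frobenius reduction gives the recursion with both indices roughly halved. Case (2) is the subtler one: the low bits of $m,\ell$ agree but the next bits disagree, and a careful analysis of how $L(1)$ on the $L(\ell)$ side pairs with the $\nabla(m)$-filtration yields the asymmetric shift to $(\alpha-1)/2, (\beta+1)/2$. In case (4) the weights $m$ and $\ell$ lie in different $SL_2$ blocks under the linkage principle for $SL_2$ in characteristic $2$, forcing $d(\alpha,\beta)=0$.

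The main obstacle is case (2), where one must unpack the interaction between the factor $L(1)$ in the Steinberg decomposition of $L(\ell)$ and the filtration layers of $\nabla(m)$ in order to justify the asymmetric shift between $(\alpha-1)/2$ and $(\beta+1)/2$. The remaining three cases should fall out rather directly from Steinberg's theorem plus the linkage principle, since in those cases the lowest bits of $m$ and $\ell$ either agree cleanly (allowing pure Frobenius reduction) or force a block mismatch (forcing the multiplicity to vanish).
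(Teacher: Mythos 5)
The paper offers no proof of this statement: it is imported, with only a change of variables, from Fayers' paper, so there is nothing in-house to compare your argument against. Judged on its own terms, your strategy is the standard one and is viable: translate to $SL_2$ decomposition numbers $[\nabla(m):L(\ell)]$ with $m=\alpha-1$, $\ell=\beta-1$ (your parity bookkeeping here is correct, including the observation that $\alpha\equiv\beta\bmod 2$ is automatic), then combine Steinberg's tensor product theorem with the characteristic-$p$ short exact sequence $0\to L(p-2-m_0)\otimes\nabla(m'-1)^{(1)}\to\nabla(m)\to L(m_0)\otimes\nabla(m')^{(1)}\to 0$ for $m=m_0+pm'$, together with the linkage principle.

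However, what you submit is a plan rather than a proof, and the one step you single out as the main obstacle --- case (2) --- is both left undone and misdiagnosed. In case (2) the parameters $\alpha,\beta$ are odd, so $\ell=\beta-1$ is even and the bottom Steinberg factor of $L(\ell)$ is $L(0)$, not $L(1)$; there is no interaction with an ``$L(1)$ on the $L(\ell)$ side'' to analyse. The asymmetric shift comes entirely from the $\nabla$ side: for $m=2m'$ the exact sequence above (with $m_0=0$, so $L(p-2-m_0)=L(0)$ is trivial in characteristic $2$) yields $[\nabla(2m'):L(2\ell')]=[\nabla(m'):L(\ell')]+[\nabla(m'-1):L(\ell')]$, and the linkage principle for $SL_2$ in characteristic $2$ (an even weight and an odd weight are never linked) annihilates exactly one of the two terms according to whether $m'\equiv\ell'\pmod 2$, i.e.\ whether $\alpha\equiv\beta\pmod 4$. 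This single identity gives cases (1) and (2) simultaneously; cases (3) and (4) then follow from $\nabla(2m'+1)\cong L(1)\otimes\nabla(m')^{(1)}$, Steinberg, and linkage, essentially as you indicate. Once case (2) is repaired along these lines your outline does become a correct proof, but as it stands the decisive computation is missing.
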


We give a closed form of the recursive formula in the following corollary:

\begin{corollary}\label{closed form for d}
    $d(\alpha,\beta) = 1$ if and only if $\alpha + \beta - 2 \supseteq_2 \alpha - \beta.$
\end{corollary}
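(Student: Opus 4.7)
The plan is to prove the corollary by strong induction on $\alpha + \beta$, tracking the four-case recursion of \Cref{decomp number recursive formula}. Write $X := \alpha + \beta - 2$ and $Y := \alpha - \beta$; note that $\alpha \equiv \beta \pmod{2}$ is automatic from the setup. The base case is $\alpha = \beta = 1$, for which $X = Y = 0$ trivially satisfies $X \supseteq_2 Y$ while $d(1,1) = 1$.

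For the inductive step let $(\alpha', \beta')$ denote the reduced pair prescribed by whichever of the first three cases applies, and put $X' := \alpha' + \beta' - 2$ and $Y' := \alpha' - \beta'$. The goal in each of these three cases is to show $X \supseteq_2 Y \iff X' \supseteq_2 Y'$, so that the inductive hypothesis for $(\alpha', \beta')$ transfers to $(\alpha, \beta)$; in the fourth (vanishing) case the goal is to show directly that $X \not\supseteq_2 Y$, matching $d(\alpha, \beta) = 0$. Direct computation gives $X = 2X'$ and $Y = 2Y'$ in case 1, so bit-containment passes through halving verbatim. In cases 2 and 3 one finds $X = 2X' + 2$ together with $Y = 2Y' + 2$ or $Y = 2Y'$ respectively, and the parity hypotheses of those cases pin down the two lowest bits of $X$ and $Y$ (for instance, in case 3, bits $0,1$ of $Y$ are both $0$ while bits $0,1$ of $X$ are $0,1$); stripping these two bits off recovers exactly the higher-bit expansions of $X'$ and $Y'$ up to a uniform shift, so bit-containment on the upper bits is preserved. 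In case 4 the parity hypotheses force bit $1$ of $X$ to be $0$ while bit $1$ of $Y$ is $1$, so $X \supseteq_2 Y$ fails immediately.

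The main obstacle is the bookkeeping in cases 2 and 3: because $X'$ is not literally $X/2$, one must verify that the $+2$ correction, combined with the forced low bits of $X$ and $Y$, does not disturb the induced bit-containment on the upper bits. This reduces to a short but careful inspection of binary expansions under the parity constraints of each case, and presents no real conceptual difficulty.
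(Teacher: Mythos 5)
Your proposal is correct and takes essentially the same route as the paper: induction driven by the four-case recursion of \Cref{decomp number recursive formula}, with the key step being that the containment relation $\alpha+\beta-2 \supseteq_2 \alpha-\beta$ passes through the halving (after accounting for the forced low-order bits in each parity case) and fails outright in the vanishing case. The paper organises the induction by incrementing $\alpha$ and $\beta$ separately rather than by strong induction on $\alpha+\beta$, but the substance of the argument is identical, and your low-bit computations (e.g.\ $X=2X'+2$, $Y=2Y'$ in the even congruent case, and bit $1$ of $X$ being $0$ while bit $1$ of $Y$ is $1$ in the vanishing case) all check out.
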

\begin{proof}
    We prove this via induction on $\alpha$ and $\beta.$ In the base case, $d(1,1) = 1$ \cite[Corollary 4.17]{mathas1999iwahori}, and $1+1 - 2 \supseteq_2 1-1.$

    Now assume that up to some fixed $\alpha$ and $\beta,$ we have $d(\alpha,\beta) = 1$ if and only if $\alpha + \beta - 2 \supseteq_2 \alpha - \beta.$ 

    Now consider $d(\alpha+1,\beta).$ We have four cases:
    \begin{enumerate}
        \item If $\alpha+1$ and $\beta$ are both odd and congruent mod $4,$ then $d(\alpha + 1,\beta) = d(\frac{\alpha+2}{2}, \frac{\beta+1}{2}),$ which equals $1$ if and only if $\frac{\alpha + \beta -1}{2} \supseteq_2 \frac{\alpha - \beta + 1}{2}$ (by the induction hypothesis), which equals $1$ if and only if $(\alpha + 1) + \beta - 2 \supseteq_2 (\alpha+1) - \beta$ as required.
        
        \item If $\alpha+1$ and $\beta$ are both odd but not congruent mod $4,$ then $d(\alpha+1,\beta) = d(\frac{\alpha}{2}, \frac{\beta+1}{2}),$ which equals $1$ if and only if $\frac{\alpha + \beta -3}{2} \supseteq_2 \frac{\alpha - \beta - 1}{2}$ (by the induction hypothesis), which equals $1$ if and only if $(\alpha+1) + \beta -4 \supseteq_2 (\alpha+1) - \beta - 2.$ As $\alpha+1$ and $\beta$ are both odd but not congruent mod $4,$ both sides of the relation are $0$ mod $4,$ hence one can add $2$ to both sides and the statement is still true. Hence $d(\alpha+1,\beta) = 1$ if and only if $(\alpha+1) + \beta - 2 \supseteq_2 (\alpha+1) - \beta$ as required.
        
        \item If $\alpha+1$ and $\beta$ are both even and congruent mod $4,$ then $d(\alpha+1,\beta) = d(\frac{\alpha+1}{2},\frac{\beta}{2}),$ which is $1$ if and only if $\frac{\alpha + \beta - 3}{2} \supseteq_2 \frac{\alpha + 1 - \beta}{2}$ (by the induction hypothesis), which is $1$ if and only if $(\alpha + 1) + \beta - 4 \supseteq_2 (\alpha+1) - \beta.$ As $\alpha+1$ and $\beta$ are both even and congruent mod $4,$ both sides of the relation are $0$ mod $4,$ hence one can add $2$ to the left hand side of the relation and the statement is still true. Hence $d(\alpha+1, \beta) = 1$ if and only if $(\alpha+1) + \beta - 2 \supseteq_2 (\alpha+1) - \beta$ as required. 
        
        \item If $\alpha+1$ and $\beta$ are both even and not congruent mod $4,$ then $d(\alpha+1,\beta) = 0.$ Similarly, $(\alpha + 1) + \beta + 2 \not\supseteq_2 (\alpha+1) - \beta$ as the left hand side is $0$ mod $4,$ but the right hand side is $2$ mod $4.$  
    \end{enumerate}

    Now consider $d(\alpha,\beta+1).$ As before, we have four cases:
    \begin{enumerate}
        \item If $\alpha$ and $\beta+1$ are both odd and congruent mod $4,$ then $d(\alpha,\beta+1) = d(\frac{\alpha+1}{2},\frac{\beta+2}{2}),$ which equals $1$ if and only if $\frac{\alpha + \beta -1}{2} \supseteq_2 \frac{\alpha - \beta -1}{2}$ (by the induction hypothesis), which equals $1$ if and only if $\alpha + (\beta+1) - 2 \supseteq_2 \alpha - (\beta+1)$ as required.

        \item If $\alpha$ and $\beta+1$ are both odd but not congruent mod $4,$ then $d(\alpha,\beta+1) = d(\frac{\alpha-1}{2},\frac{\beta+2}{2})$ which is $1$ if and only if $\frac{\alpha + \beta -3}{2} \supseteq_2 \frac{\alpha - \beta  -3}{2}$ (by the induction hypothesis), which equals $1$ if and only if $\alpha + (\beta+1) - 4 \supseteq_2 \alpha - (\beta+1) - 2.$ As $\alpha$ and $\beta+1$ are both odd but not congruent mod $4,$ both sides of the relation are $0$ mod $4,$ hence one can add $2$ to both sides and the statement is still true. Hence $d(\alpha,\beta+1) = 1$ if and only if $\alpha + (\beta+1) - 2 \supseteq_2 \alpha - (\beta+1)$ as required.

        \item If $\alpha$ and $\beta+1$ are both even and congruent mod $4,$ then $d(\alpha,\beta+1) = d(\frac{\alpha}{2},\frac{\beta+1}{2})$ which is $1$ if and only if $\frac{\alpha + \beta - 3}{2} \supseteq_2 \frac{\alpha - \beta - 1}{2}$ (by the induction hypothesis), which equals $1$ if and only if $\alpha + (\beta+1) - 4 \supseteq_2 \alpha - (\beta+1).$ As $\alpha$ and $\beta+1$ are both even and congruent mod $4,$ both sides of the relation are $0$ mod $4,$ hence one can add $2$ to the left hand side of the relation and the statement is still true. Hence $d(\alpha,\beta+1) = 1$ if and only if $\alpha + (\beta+1) - 2 \supseteq_2 \alpha - (\beta+1).$

        \item If $\alpha$ and $\beta+1$ are both even and not congruent mod $4,$ then $d(\alpha,\beta+1) = 0.$ Similarly, $\alpha + (\beta+1) - 2 \not\supseteq_2 \alpha - (\beta+1)$ as the left hand side if $0$ mod $4,$ but the right hand side is $2$ mod $4.$ 
    \end{enumerate}

    Hence the statement is true by induction.
\end{proof}

We will use the following notation frequently:

\begin{definition}
    For $\lambda = (\lambda_1, \lambda_2) \vdash n$ a $2$-part partition, and non-negative integer $d \leq \lambda_2,$ we define $\lambda \pm d := (\lambda_1 + d, \lambda_2 - d).$
\end{definition}

We can now rewrite the statement of the filtration of $2$-part Young modules via $2$-part Specht modules in a nice form.

\begin{corollary}\label{Young module filtered by Specht Zain}
    Let $n$ be a positive integer, let $\lambda = (\lambda_1, \lambda_2)$ be a $2$-part partition of $n,$ and let $\alpha := \lambda_1 - \lambda_2 + 1.$ Then $Y_2^\lambda$ has a filtration via Specht modules, with the set of factors being \[ \{ S_2^{(\lambda \pm d)} \, | \, 0 \leq d \leq \lambda_2, \alpha -1 + d \supseteq_2 d \},\] and the factors going from bottom to top in increasing value of $d.$
\end{corollary}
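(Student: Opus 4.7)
The plan is to combine Theorem \ref{Young module filtered by Specht SD} with Corollary \ref{closed form for d}, essentially as a bookkeeping exercise. Applying Theorem \ref{Young module filtered by Specht SD} to $\lambda$ yields a filtration $0 = Y_0 \leq Y_1 \leq \cdots \leq Y_s = Y_2^\lambda$ whose successive quotients $Y_i/Y_{i-1}$ are direct sums of $[\nabla(\mu^i):L(\lambda)]$ copies of $S_2^{\mu^i}$, ordered so that dominance of the $\mu^i$ increases up the filtration.

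The first step would be to pin down the labels $\mu^i$ that actually appear. Since every such $\mu$ satisfies $\lambda \trianglelefteq \mu$ and $|\mu| = |\lambda| = n$, combining $\mu_1 + \mu_2 \geq \lambda_1 + \lambda_2 = n$ with $|\mu| = n$ forces $\mu_3 = \mu_4 = \cdots = 0$, so $\mu$ is itself a $2$-part partition. Therefore $\mu = (\lambda_1 + d, \lambda_2 - d) = \lambda \pm d$ for a unique $0 \leq d \leq \lambda_2$, and the dominance order on the set of candidates is linear in $d$, so the bottom-to-top ordering on the filtration matches increasing $d$, exactly as in the target statement.

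It remains to identify the multiplicities. Setting $\alpha_\mu := \mu_1 - \mu_2 + 1 = \alpha + 2d$ and $\beta := \alpha$, Theorem \ref{decomp number recursive formula} gives $[\nabla(\lambda \pm d):L(\lambda)] = d(\alpha + 2d, \alpha)$, and one checks from the recursive formula that this value is always $0$ or $1$. By Corollary \ref{closed form for d}, it equals $1$ precisely when $(\alpha + 2d) + \alpha - 2 \supseteq_2 (\alpha + 2d) - \alpha$, i.e.\ $2(\alpha + d - 1) \supseteq_2 2d$. Since doubling both sides of a containment merely shifts the binary expansion by one place, this reduces to $\alpha - 1 + d \supseteq_2 d$, matching the claimed indexing set.

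I do not anticipate a genuine obstacle here: the substantive mathematics has already been done in Theorem \ref{Young module filtered by Specht SD} and Corollary \ref{closed form for d}. The only points demanding any care are the observation that dominance forces the $\mu^i$ to have at most two parts (so the $d$-parametrisation is exhaustive) and the elementary fact that $\supseteq_2$ is invariant under doubling, which lets the binary condition be stated cleanly as $\alpha - 1 + d \supseteq_2 d$.
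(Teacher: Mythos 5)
Your proposal is correct and follows essentially the same route as the paper: apply Theorem \ref{Young module filtered by Specht SD}, observe that the labels must be $2$-part partitions $\lambda \pm d$, and use Corollary \ref{closed form for d} to translate $d(\alpha+2d,\alpha)=1$ into $2\alpha+2d-2 \supseteq_2 2d$, i.e.\ $\alpha-1+d \supseteq_2 d$. The only difference is that you spell out the dominance and doubling-invariance details that the paper leaves implicit.
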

\begin{proof}
     By \Cref{Young module filtered by Specht SD} and \Cref{closed form for d}, this follows immediately as $S_2^{(\lambda \pm d)}$ appears in the filtration of $Y_2^\lambda$ if and only if $d(\alpha + 2d,\alpha) = 1,$ which is true if and only if $2\alpha + 2d -2 \supseteq_2 2d,$ if and only if $\alpha -1 + d \supseteq_2 d. $
\end{proof}

\begin{remark}
    An equivalent statement to $\alpha - 1 + d \supseteq_2 d$ is the statement that $(\alpha - 1) \cap_2 d = 0,$ where $(\alpha - 1) \cap_2 d$ is the bitwise AND operator on $(\alpha - 1)$ and $d$ when written in binary.
\end{remark}

Given that $2$-part Young modules are filtered by $2$-part Specht modules, and the result which states precisely which $2$-part Specht modules are uniserial, we can work towards a proof classifying the uniserial $2$-part Young modules. 

We can do the $2$-singular case immediately.

\begin{theorem}\label{Young uniserial 2-singular}
    Let $\lambda = (\lambda_1, \lambda_1)$ be a $2$-singular $2$-part partition of $n.$ Then $Y_2^\lambda$ is uniserial if and only if $\lambda_1 = 1.$
\end{theorem}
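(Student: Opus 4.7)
The plan is to first identify $Y_2^{(\lambda_1, \lambda_1)}$ explicitly via dimension matching, then address each direction of the equivalence. Writing $\lambda = (\lambda_1, \lambda_1)$ gives $\alpha = 1$, so the bitwise condition $\alpha - 1 + d \supseteq_2 d$ in \Cref{Young module filtered by Specht Zain} reduces to the tautology $d \supseteq_2 d$. Hence every Specht module $S_2^{(\lambda_1 + d, \lambda_1 - d)}$ for $0 \leq d \leq \lambda_1$ appears in the Specht filtration of $Y_2^\lambda$, and telescoping the two-part Specht dimensions $\binom{2\lambda_1}{\lambda_1 - d} - \binom{2\lambda_1}{\lambda_1 - d - 1}$ yields $\dim Y_2^\lambda = \binom{2\lambda_1}{\lambda_1} = \dim M_2^\lambda$. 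Since $Y_2^\lambda$ is a direct summand of $M_2^\lambda$, this forces $Y_2^\lambda = M_2^\lambda$.

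For the base case $\lambda_1 = 1$, the module $M_2^{(1, 1)}$ is the regular representation of $S_2$ over $\mathbb{F}_2$, i.e., $\mathbb{F}_2[x] / (x + 1)^2$, which is uniserial. For $\lambda_1 \geq 3$ the plan is to invoke James' theorem: $S_2^{(\lambda_1, \lambda_1)}$ is the kernel of the $2$-part James homomorphism $\psi : M_2^{(\lambda_1, \lambda_1)} \to M_2^{(\lambda_1 + 1, \lambda_1 - 1)}$, and a dimension count forces $\psi$ to be surjective, so $M_2^{(\lambda_1, \lambda_1)} / S_2^{(\lambda_1, \lambda_1)} \cong M_2^{(\lambda_1 + 1, \lambda_1 - 1)}$. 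Applying \Cref{Young module filtered by Specht Zain} again to $Y_2^{(\lambda_1 + 1, \lambda_1 - 1)}$ (now with $\alpha = 3$) shows $d = 2$ lies in $[0, \lambda_1 - 1]$ but fails $d + 2 \supseteq_2 d$ (since $4 = 100_2$ and $2 = 010_2$ disagree at bit~$1$), so the factor $S_2^{(\lambda_1 + 3, \lambda_1 - 3)}$ is absent from the Specht filtration of $Y_2^{(\lambda_1 + 1, \lambda_1 - 1)}$. Therefore $\dim Y_2^{(\lambda_1 + 1, \lambda_1 - 1)} < \dim M_2^{(\lambda_1 + 1, \lambda_1 - 1)}$, forcing $M_2^{(\lambda_1 + 1, \lambda_1 - 1)}$ to have more than one Young summand and hence to be decomposable. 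Since quotients of uniserial modules are uniserial (and so indecomposable), this prevents $M_2^{(\lambda_1, \lambda_1)}$ from being uniserial.

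The case $\lambda_1 = 2$ must be treated separately because $M_2^{(3, 1)}$ is itself indecomposable (its dimension $4$ matches $\dim Y_2^{(3, 1)}$), so the decomposability argument collapses. Here the plan is a socle argument: since $(2, 2)$ has $2$-regularisation $(3, 1)$, the Specht module $S_2^{(2, 2)}$ has head $D_2^{(3, 1)}$, and both have dimension $2$, so $S_2^{(2, 2)} \cong D_2^{(3, 1)}$ is a simple submodule of $M_2^{(2, 2)}$. The invariant tabloid sum $v = \sum_{|S| = 2} \{S\}$ spans a trivial submodule $\langle v \rangle \cong D_2^{(4)}$, and a direct calculation gives $\psi(v) = 3 \sum_{|T| = 3} \{T\} \neq 0$ modulo $2$, so $v \notin \ker \psi = S_2^{(2, 2)}$. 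Thus $\Soc(M_2^{(2, 2)})$ contains two non-isomorphic simples $D_2^{(4)}$ and $D_2^{(3, 1)}$, which rules out uniseriality. The main obstacle is the non-uniformity of the argument: the decomposable-quotient trick requires $\lambda_1 \geq 3$ so that $d = 2$ is in range, while the socle argument at $\lambda_1 = 2$ depends on $S_2^{(2, 2)}$ itself being simple, a feature not available for larger $\lambda_1$.
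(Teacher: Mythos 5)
Your opening observation that $Y_2^{(\lambda_1,\lambda_1)} = M_2^{(\lambda_1,\lambda_1)}$ (all $d$ survive the condition $\alpha-1+d \supseteq_2 d$ when $\alpha = 1$, and the telescoping dimension count forces the summand to be everything) is correct and is a genuinely different starting point from the paper, which never identifies the Young module with the permutation module. Your $\lambda_1 = 1$ case is fine, and your $\lambda_1 = 2$ socle argument is essentially sound (though you should justify $v \notin S_2^{(2,2)}$ only via the true inclusion $S_2^{(2,2)} \subseteq \ker\psi$, not the equality you assert --- see below; or more simply note that a $2$-dimensional simple non-trivial module cannot contain the invariant vector $v$).

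The case $\lambda_1 \geq 3$, however, rests on two claims that are false in characteristic $2$: that $S_2^{(\lambda_1,\lambda_1)} = \ker\psi$ for the single map $\psi : M_2^{(\lambda_1,\lambda_1)} \to M_2^{(\lambda_1+1,\lambda_1-1)}$, and that $\psi$ is surjective. James' Kernel Intersection Theorem expresses $S^{(m,m)}$ as the intersection of the kernels of \emph{all} the maps $\psi_{m,i} : M^{(m,m)} \to M^{(2m-i,i)}$ for $0 \leq i \leq m-1$; the single kernel $\ker\psi_{m,m-1}$ is strictly larger in characteristic $2$. Already for $m=2$ the image of $\psi$ is spanned by the vectors $\{i\}+\{j\}$, i.e.\ the augmentation-zero subspace of $M_2^{(3,1)}$ of dimension $3$, so $\ker\psi$ has dimension $3$ while $\dim S_2^{(2,2)} = 2$. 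In general, applying the sum-of-coefficients functional shows $\im\psi$ lies in a proper subspace when $m$ is even, and $\psi\left(\sum_{|S|=m}\{S\}\right) = (m+1)\sum_{|T|=m-1}\{T\} = 0$ when $m$ is odd, putting the invariant vector in $\ker\psi$ even though it generally does not lie in $S_2^{(m,m)}$ (e.g.\ $m=5$). Consequently the ``dimension count'' does not force surjectivity, the isomorphism $M_2^{(\lambda_1,\lambda_1)}/S_2^{(\lambda_1,\lambda_1)} \cong M_2^{(\lambda_1+1,\lambda_1-1)}$ is unjustified (and false in general), and the decomposable-quotient argument collapses; your correct observation that $M_2^{(\lambda_1+1,\lambda_1-1)}$ is decomposable for $\lambda_1 \geq 3$ then tells you nothing about $M_2^{(\lambda_1,\lambda_1)}$. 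The paper avoids this entirely: it uses self-duality of $Y_2^\lambda$ to place $D_2^{(n)}$ (or, in the residual case $\lambda_1 \equiv -1 \bmod 2^L$, the factor $D_2^{(n-1,1)}$) in the socle of the bottom Specht factor $S_2^\lambda$, and derives a contradiction from James' Theorem 24.4 and the composition-factor criterion. To repair your approach you would need an independent proof that $M_2^{(\lambda_1,\lambda_1)}/S_2^{(\lambda_1,\lambda_1)}$ is decomposable, or a socle argument in the spirit of your $\lambda_1 = 2$ case valid for all $\lambda_1 > 1$.
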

\begin{proof}
    First observe that $\alpha = \lambda_1 - \lambda_1 + 1 = 1,$ so $\alpha - 1 + d \supseteq_2 d$ if and only if $d \supseteq_2 d,$ so by \Cref{Young module filtered by Specht Zain} we have that $Y_2^\lambda$ is filtered by $S_2^{(\lambda \pm d)}$ for all $0 \leq d \leq \lambda_1.$

    Now assume $\lambda_1 = 1,$ then by \Cref{Young module filtered by Specht Zain}, $Y_2^{(1,1)}$ is filtered from bottom to top by $S_2^{(1,1)}$ and $S_2^{(2)}.$ Both are isomorphic to $D_2^{(2)},$ and $Y_2^{(1,1)}$ is indecomposable by definition, so $Y_2^{(1,1)}$ is uniserial.

    Now assume $\lambda_1 >1$. We will show that $Y_2^\lambda$ is not uniserial. Assume that $\lambda_1 \not\equiv -1 \mod 2^L,$ and assume for contradiction that $Y_2^\lambda$ is uniserial. By \Cref{Young module filtered by Specht Zain}, $S_2^{(\lambda \pm \lambda_1)} = S_2^{(n)} = D_2^{(n)}$ appears in the head of $Y_2^\lambda,$ hence by self-duality must appear in the socle of $Y_2^\lambda.$ But, by assumption, $Y_2^\lambda$ is uniserial, so $D_2^{(n)}$ appears in the socle of the submodule isomorphic to $S_2^{\lambda}.$ However, by \cite[Theorem 24.4]{James1978}, this is true if and only if $\lambda_1 \equiv -1 \mod 2^{L(\lambda_2)},$ a contradiction. Hence $Y_2^\lambda$ is not uniserial.

    Now assume that $\lambda_1 > 1,$ and that $\lambda_1 \equiv -1 \equiv 1 + 2 + \ldots + 2^{L-1} \mod 2^L.$  Again, assume for contradiction that $Y_2^\lambda$ is uniserial. Again by \Cref{Young module filtered by Specht Zain}, we know that the second composition factor of $Y_2^\lambda$ when read from top to bottom must be $D_2^{(n-1,1)}.$ By self duality, this must be the second composition factor of $Y_2^\lambda$ when read from bottom to top. As $\lambda_1 > 1,$ this must be a composition factor of $S_2^\lambda.$ However, $\alpha + 2(\lambda_1 - 1) \not\supseteq \lambda_1 -1,$ as the right hand side of the relation is congruent to $2$ mod $4,$ but the left hand side of the relation is congruent to $1$ mod $4,$ so $D_2^{(n-1,1)}$ is not a composition factor of $S_2^\lambda$ \cite[Corollary 3.10]{kapadia2024submodule} a contradiction. Hence $Y_2^\lambda$ is not uniserial.
\end{proof}

For the $2$-regular case, some preliminary results are needed first.

\begin{lemma}\label{Young module uniserial if composition factors}
    Let $\lambda = (\lambda_1, \lambda_2)$ be a $2$-regular partition of $n$, and assume $S_2^\lambda$ is uniserial. Then $Y_2^\lambda$ is uniserial if and only if $Y_2^\lambda / S_2^\lambda$ has the same set of composition factors, including multiplicities, as $S_2^\lambda,$ excluding $D_2^\lambda.$
\end{lemma}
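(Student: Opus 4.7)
The plan is to leverage two key structural facts. First, $Y_2^\lambda$ is self-dual, so the composition factors along any composition series form a palindrome (since every simple $\F_2 S_n$-module is self-dual). Second, $[Y_2^\lambda : D_2^\lambda] = 1$: this follows from the Specht filtration of \Cref{Young module filtered by Specht Zain} combined with the standard decomposition-number facts $[S_2^\mu : D_2^\lambda] = 0$ for $\mu \triangleright \lambda$ and $[S_2^\lambda : D_2^\lambda] = 1$.

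For the forward direction, let $j$ denote the composition length of $S_2^\lambda$. If $Y_2^\lambda$ is uniserial, then $S_2^\lambda$, being uniserial and a submodule of a uniserial module, must coincide with the unique submodule of $Y_2^\lambda$ of composition length $j$; its composition factors therefore occupy the bottom $j$ positions of the composition series of $Y_2^\lambda$, with the head $D_2^\lambda$ at position $j$. Palindromicity combined with $[Y_2^\lambda : D_2^\lambda]=1$ forces $D_2^\lambda$ to occupy the unique centre of the palindrome, so $Y_2^\lambda$ has composition length $2j-1$, and the top $j-1$ composition factors are a reversed copy of the bottom $j-1$ factors of $S_2^\lambda$. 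The composition factor multiset of $Y_2^\lambda / S_2^\lambda$ is therefore exactly that of $S_2^\lambda$ with $D_2^\lambda$ removed.

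For the reverse direction, I would assume the composition-factor condition and aim to deduce uniseriality of $Y_2^\lambda$ by induction on composition length, which the condition forces to be $2j-1$, with $D_2^\lambda$ appearing once and every other composition factor of $S_2^\lambda$ appearing exactly twice. The key step is to show that the socle of $Y_2^\lambda$ is simple: the unique simple submodule $D^{(1)} := \Soc(S_2^\lambda)$ of $S_2^\lambda$ injects into $\Soc(Y_2^\lambda)$, and any additional simple summand of $\Soc(Y_2^\lambda)$ would, by self-duality, have to appear symmetrically in the head of $Y_2^\lambda$; combining this with the multiplicity count and the indecomposability of $Y_2^\lambda$ should rule out such extra summands. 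Self-duality then produces a simple head. The subquotient $\Rad(Y_2^\lambda)/\Soc(Y_2^\lambda)$ is self-dual, of strictly smaller composition length, and inherits an analogous set-up with uniserial submodule $\Rad(S_2^\lambda)/\Soc(S_2^\lambda)$, so the inductive hypothesis applied there pins down the entire composition series as a unique chain, giving uniseriality of $Y_2^\lambda$.

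The main obstacle is this reverse direction, specifically showing that the socle of $Y_2^\lambda$ is simple. This is the point at which self-duality, indecomposability, and the uniseriality of $S_2^\lambda$ must be combined most delicately: the composition factor count alone permits \emph{a priori} non-uniserial indecomposable modules with the same multiplicities, and ruling these out requires that any ``branching'' at the socle would either exceed the permitted composition factor multiplicities (once reflected symmetrically through the head by self-duality) or produce a decomposition contradicting indecomposability of $Y_2^\lambda$.
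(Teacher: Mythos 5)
Your forward direction is essentially the paper's argument: self-duality of $Y_2^\lambda$ together with $[Y_2^\lambda : D_2^\lambda]=1$ forces a palindromic composition series of length $2j-1$ with the copy of $S_2^\lambda$ occupying the bottom $j$ terms, and that part is fine. The genuine gap is exactly where you locate it: in the reverse direction you never establish that $\Soc(Y_2^\lambda)$ is simple, and the justification you sketch cannot close it. Indecomposability does give, for every simple $D$, that any socle constituent not contained in $\Rad(Y_2^\lambda)$ splits off as a direct summand, whence $[\Soc(Y_2^\lambda):D]+[\Head(Y_2^\lambda):D]\leq [Y_2^\lambda:D]$; combined with $[\Soc(Y_2^\lambda):D]=[\Head(Y_2^\lambda):D]$ from self-duality this excludes $D_2^\lambda$ from the socle and excludes any simple occurring twice in the socle. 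But it does \emph{not} exclude $\Soc(Y_2^\lambda)\cong D^{(1)}\oplus D^{(i)}$ for an intermediate composition factor $D^{(i)}$ of $S_2^\lambda$: such a factor has total multiplicity two in $Y_2^\lambda$, so one copy in the socle and one in the head is fully consistent with your multiplicity count, with self-duality, and with indecomposability. Ruling this out requires actually using the embedded uniserial copy of $S_2^\lambda$ (and, dually, the uniserial quotient isomorphic to $(S_2^\lambda)^{*}$), which your outline does not do. The induction step has a second unaddressed problem: $\Rad(Y_2^\lambda)/\Soc(Y_2^\lambda)$ is not a Young module and need not be indecomposable, so the inductive hypothesis would have to be reformulated as a general statement about self-dual indecomposable modules containing a suitable uniserial submodule; as written there is nothing for the induction to recurse on.

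For comparison, the paper's converse avoids the socle analysis entirely: it takes an arbitrary composition series $0=Y_0\subset\cdots\subset Y_{2j-1}=Y_2^\lambda$, places $D_2^\lambda$ as the middle factor using $[Y_2^\lambda:D_2^\lambda]=1$, identifies $Y_j$ with the copy of $S_2^\lambda$ by comparing composition lengths, and then invokes self-duality to pin down the series. Your route is structurally different and, at its key step, incomplete.
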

\begin{proof}
    First assume that $Y_2^\lambda$ is uniserial. Then by definition there is a unique composition series $\{0\} = Y_0 \subseteq Y_1 \subseteq \ldots \subseteq Y_k = Y_2^\lambda.$ Let $D_i := Y_i / Y_{i-1},$ and we know there is some submodule $Y_j$ which is isomorphic to $S_2^\lambda$ . By \Cref{Young module filtered by Specht Zain}, we know that $Y_2^\lambda$ is filtered by $S_2^\lambda$ and Specht modules labelled by partitions which dominate $\lambda,$ hence $D_2^\lambda$ appears as a composition factor of $Y_2^\lambda$ exactly once. Hence $D_j$ is isomorphic to $D_2^\lambda$ as $\lambda$ is $2$-regular, and by self-duality of $Y_2^\lambda$, $D_{j-l}$ is isomorphic to $D_{j+l}$ for all $1 \leq l \leq j-1,$ hence $k = 2j-1.$ Therefore $Y_2^\lambda / S_2^\lambda$ is isomorphic to the dual of the radical of $S_2^\lambda,$ and so has all of the composition factors of $S_2^\lambda$ excluding $D_2^\lambda,$ with the same multiplicities (precisely one each).

    Conversely, assume that $S_2^\lambda$ is uniserial and that $Y_2^\lambda / S_2^\lambda$ has the same composition factors as $S_2^\lambda$ excluding $D_2^\lambda,$ with each composition factor appearing exactly once. We know that $Y_2^\lambda$ is self-dual, and $[Y_2^\lambda : D_2^\lambda] = 1,$ so $Y_2^\lambda$ has an odd number of composition factors. Hence, we can write any composition series as $\{0\} = Y_0 \subseteq Y_1 \subseteq \ldots \subseteq Y_{2j-1} = Y_2^\lambda,$ where $j$ is the number of composition factors of $S_2^\lambda.$ Writing $D_i := Y_i / Y_{i-1},$ we must have that $D_j$ is isomorphic to $D_2^\lambda.$ There is a submodule inside $Y_2^\lambda$ isomorphic to $S_2^\lambda,$ and so it must contain $Y_j,$ but $S_2^\lambda$ and $Y_j$ have the same number of composition factors, so $Y_j$ is isomorphic to $S_2^\lambda.$ By self-duality of $Y_2^\lambda,$ this means $D_{j-l}$ is isomorphic to $D_{j+l}$ for all $1 \leq l \leq j-1,$ and hence $Y_2^\lambda$ has a unique composition series.

\end{proof}

The following corollary will be useful.

\begin{corollary}\label{Young module not a composition factors of the Specht}
    Let $\lambda = (\lambda_1, \lambda_2)$ be a $2$-regular partition of $n.$ Let $\alpha := \lambda_1 - \lambda_2 + 1.$ If there is a $0 \leq d \leq \lambda_2$ such that $\alpha -1 + d \supseteq_2 d$ but $\alpha + 2d \not\supseteq_2 d,$ then $Y_2^\lambda$ is not uniserial.
\end{corollary}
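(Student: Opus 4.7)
The plan is to derive a contradiction from the assumption that $Y_2^\lambda$ is uniserial. The strategy combines three ingredients already established: the Specht filtration of Corollary \ref{Young module filtered by Specht Zain}, the composition factor criterion for $S_2^\lambda$ (from \cite[Corollary 3.10]{kapadia2024submodule}, already invoked in the proof of Theorem \ref{Young uniserial 2-singular}), and the characterization of uniseriality in Lemma \ref{Young module uniserial if composition factors}.

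Fix a $d$ witnessing the hypothesis. First I would observe that $d \geq 1$ automatically: when $d = 0$, both conditions $\alpha - 1 + d \supseteq_2 d$ and $\alpha + 2d \supseteq_2 d$ are trivially satisfied, so $d = 0$ cannot fail the second. In particular $\lambda \pm d$ strictly dominates $\lambda$, and since $\lambda$ is $2$-regular (so $\lambda_1 > \lambda_2$), the partition $\lambda \pm d = (\lambda_1 + d, \lambda_2 - d)$ is also $2$-regular, making $D_2^{(\lambda \pm d)}$ a well-defined simple module distinct from $D_2^\lambda$.

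Next I would apply Corollary \ref{Young module filtered by Specht Zain}: the condition $\alpha - 1 + d \supseteq_2 d$ places $S_2^{(\lambda \pm d)}$ as a factor in the Specht filtration of $Y_2^\lambda$, so $D_2^{(\lambda \pm d)}$ (which is the head of $S_2^{(\lambda \pm d)}$) is a composition factor of $Y_2^\lambda$. Meanwhile, $\alpha + 2d \not\supseteq_2 d$ implies via \cite[Corollary 3.10]{kapadia2024submodule} that $D_2^{(\lambda \pm d)}$ is \emph{not} a composition factor of $S_2^\lambda$. Therefore $D_2^{(\lambda \pm d)}$ must appear as a composition factor of the quotient $Y_2^\lambda / S_2^\lambda$.

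To close the argument, suppose $Y_2^\lambda$ were uniserial. Then its submodule $S_2^\lambda$ would be uniserial too, and Lemma \ref{Young module uniserial if composition factors} would force every composition factor of $Y_2^\lambda / S_2^\lambda$ to appear among those of $S_2^\lambda$, contradicting the preceding paragraph. The argument is short and there is no substantial obstacle; the only piece requiring care is the reduction to $d \geq 1$, which is needed to ensure $D_2^{(\lambda \pm d)} \neq D_2^\lambda$ so that the contradiction with Lemma \ref{Young module uniserial if composition factors} is genuine.
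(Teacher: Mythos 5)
Your proposal is correct and follows essentially the same route as the paper's own proof: use \Cref{Young module filtered by Specht Zain} to put $S_2^{(\lambda \pm d)}$ in the Specht filtration of $Y_2^\lambda$, use \cite[Corollary 3.10]{kapadia2024submodule} to see $D_2^{(\lambda \pm d)}$ is not a composition factor of $S_2^\lambda$, and conclude via \Cref{Young module uniserial if composition factors}. Your extra observation that $d \geq 1$ (so that $D_2^{(\lambda \pm d)} \neq D_2^\lambda$) is a small point of care that the paper leaves implicit, but it does not change the argument.
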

\begin{proof}
    As $\alpha -1 + d \supseteq_2 d,$ we have by \Cref{Young module filtered by Specht Zain}, $S_2^{(\lambda \pm d)}$ appears in the filtration of $Y_2^\lambda,$ hence $D_2^{(\lambda \pm d)}$ is a composition factor of $Y_2^\lambda.$ However, $\alpha + 2d \not\supseteq_2 d,$ so by \cite[Theorem 3.10]{kapadia2024submodule}, we have that $D_2^{(\lambda \pm d)}$ is not a composition factor of $S_2^\lambda,$ so we must have that $D_2^{\lambda}$ is a composition factor of $Y_2^\lambda / S_2^\lambda$ which is not a composition factor of $S_2^\lambda.$ Hence, by \Cref{Young module uniserial if composition factors}, $Y_2^\lambda$ is not uniserial.
\end{proof}

\begin{theorem}\label{Uniserial 2-part Young Modules}
    Let $\lambda = (\lambda_1, \lambda_2)$ be a $2$-part partition of $n, \alpha := \lambda_1 - \lambda_2 + 1, \nu := \nu_2(\alpha),$ and let $L := L_2(\lambda_2).$ Then the Young module in characteristic $2, Y_2^\lambda$ is uniserial if and only if $\alpha + 2^\nu \equiv 0$ mod $2^L$.
\end{theorem}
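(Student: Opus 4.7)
My plan is to split by the parity type of $\lambda$. The $2$-singular case is handled by \Cref{Young uniserial 2-singular}: substituting $\alpha = 1$, $\nu = 0$ into the stated congruence gives $2 \equiv 0 \pmod{2^L}$, equivalent to $L \leq 1$ and hence to $\lambda_2 = 1$, which matches that classification. For $2$-regular $\lambda$, since $S_2^\lambda$ embeds in $Y_2^\lambda$, uniseriality of $Y_2^\lambda$ forces uniseriality of $S_2^\lambda$; I then combine \Cref{2_part_uniserial_new} with \Cref{Young module uniserial if composition factors} to reduce the question to composition-factor matching.

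For the forward direction, assume $\alpha + 2^\nu \equiv 0 \pmod{2^L}$. Then $\alpha \bmod 2^L = 2^L - 2^\nu$, so the bits of $\alpha - 1$ in positions $0, \ldots, L-1$ are all $1$ except at position $\nu$. Since $d \leq \lambda_2 < 2^L$, the filtration condition $(\alpha-1) \cap_2 d = 0$ from \Cref{Young module filtered by Specht Zain} and its remark forces $d \in \{0, 2^\nu\}$ (or only $d = 0$ when $\nu \geq L$, in which case $Y_2^\lambda = S_2^\lambda$ is uniserial). When $\nu < L$, the filtration is $S_2^\lambda \subset Y_2^\lambda$ with quotient $S_2^{(\lambda \pm 2^\nu)}$, so by \Cref{Young module uniserial if composition factors} it suffices to verify the set equality
\[
\{e \in [1, \lambda_2] : \alpha + 2e \supseteq_2 e\} = \{e \in [2^\nu, \lambda_2] : \alpha + 2e \supseteq_2 e - 2^\nu\}.
\]
A bit-by-bit analysis of $\alpha + 2e$ at positions below $\nu$ --- where $\alpha$ has zero bits, so no carries propagate --- shows that any $e$ satisfying either side must be a multiple of $2^\nu$. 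Given this, the set equality follows by splitting on whether bit $\nu$ of $e$ equals $1$ or $0$: the former is essentially trivial, while the latter requires tracking carries in $\alpha + 2e$ through $\alpha$'s block of $1$'s at positions $\nu, \ldots, L-1$ to show that bits $\nu, \ldots, j$ of $\alpha + 2e$ are all $1$, where $j$ is the smallest $1$-bit of $e$ above $\nu$.

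For the reverse direction, assume $\alpha + 2^\nu \not\equiv 0 \pmod{2^L}$; we may assume $S_2^\lambda$ is uniserial, placing us in case 2 or case 3 of \Cref{2_part_uniserial_new}. I would apply \Cref{Young module not a composition factors of the Specht} with $d = 2^{L-1}$ in case 2 (where $b = L-1$ forces bit $L-1$ of $\alpha$ to be $0$) and $d = 2^{\nu+1}$ in case 3 (where $c \geq L > b$, combined with the fact that exactly one of $b, c$ equals $\nu + 1$, forces $b = \nu + 1 < L$, so bit $\nu + 1$ of $\alpha$ is $0$). In both cases $d \leq 2^{L-1} \leq \lambda_2$, and a short bit computation verifies $\alpha - 1 + d \supseteq_2 d$ yet $\alpha + 2d \not\supseteq_2 d$. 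The main obstacle is the forward-direction sub-case where bit $\nu$ of $e$ is $0$: here $e - 2^\nu$ differs non-trivially from $e$ due to the borrow, and the essential lever is the reduction to $e$ being a multiple of $2^\nu$, which lets us control the carries in $\alpha + 2e$ through $\alpha$'s block of $1$'s.
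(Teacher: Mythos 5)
Your proposal is correct and follows essentially the same route as the paper: the same reduction to the three cases of \Cref{2_part_uniserial_new}, the same two-term filtration $d \in \{0, 2^\nu\}$ fed into \Cref{Young module uniserial if composition factors}, and the same witnesses $d = 2^{L-1}$ and $d = 2^{\nu+1}$ via \Cref{Young module not a composition factors of the Specht} in the negative cases. The only difference is organizational: the paper proves the composition-factor match by computing both sets explicitly as $\{2^j \mid \nu \leq j \leq L-1\}$, whereas you verify the equivalence of $\alpha + 2e \supseteq_2 e$ and $\alpha + 2e \supseteq_2 e - 2^\nu$ element by element, which comes down to the same carry analysis.
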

\begin{proof}

A necessary condition is that $S_2^\lambda,$ which appears as a submodule in $Y_2^\lambda,$ must be uniserial. By \Cref{2_part_uniserial_new}, $\alpha$ must be in one of three cases:
\begin{enumerate}
    \item $\alpha + 2^\nu \equiv 0 \mod 2^L;$
    \item $\alpha + 2^\nu \equiv 2^{L-1} \mod 2^L,$ and $2^{\nu+1} + 2^{L-1} > \lambda_2;$ or
    \item $\alpha - 2^\nu \equiv 0 \mod 2^L.$
\end{enumerate}

We work through each of the cases and show that only the first case leads to a uniserial Young module. 

We start with the case that $\lambda_1 > \lambda_2.$ By \Cref{Young module uniserial if composition factors}, it suffices to check when $Y_2^\lambda / S_2^\lambda$ has the same composition factors as $S_2^\lambda,$ excluding $D_2^\lambda.$

\begin{enumerate}
    \item Assume $\alpha + 2^\nu \equiv 0 \mod 2^L.$ We split this into the case that $\nu \geq L$ and $\nu < L.$

    If $\nu \geq L,$ then $\alpha \equiv 0 \mod 2^L.$ By \cite[Main Theorem]{JamesMathas1999}, this says that actually $S_2^\lambda$ is simple and equals $D_2^\lambda.$ By \Cref{Young module filtered by Specht Zain}, $Y_2^\lambda$ is filtered by $S_2^{(\lambda \pm d)},$ where $\alpha - 1 + d \supseteq_2 d.$ In this case, $\alpha - 1 \equiv 2^0 + 2^1 + \ldots + 2^{L-1} \mod 2^L,$ so $d$ must only be $0,$ and we have that $Y_2^\lambda = S_2^\lambda = D_2^\lambda.$ In this case, the Young module is not only uniserial but is simple.
    
    If $\nu < L,$ we can write $\alpha \equiv 2^{\nu} + 2^{\nu+1} + \ldots + 2^{L-1} \mod 2^L$. By \Cref{Young module filtered by Specht Zain}, $Y_2^\lambda$ is filtered by $S_2^{(\lambda \pm d)},$ where $\alpha - 1 + d \supseteq_2 d.$ In this case $\alpha - 1 \equiv 2^0 + 2^1 + \ldots + 2^{\nu-1} + 2^{\nu+1} + 2^{\nu+2} + \ldots + 2^{L-1} \mod 2^L$, so $d$ must be $0$ or $2^\nu,$ hence $Y_2^\lambda$ is filtered from bottom to top by $S_2^\lambda$ and $S_2^{(\lambda \pm 2^\nu)}.$ So it suffices to show that $S_2^\lambda$ and $S_2^{(\lambda \pm 2^\nu)}$ have the same composition factors, excluding $D_2^\lambda.$

    By \cite[Corollary 3.10]{kapadia2024submodule}, the composition factors of $S_2^\lambda$ are $D_2^{(\lambda \pm e)},$ where $\alpha + 2e \supseteq_2 e.$ Clearly $e = 0$ satisfies this relation. For $e>0,$ let $e = \sum_{i \geq 0}2^{e_i}$ where the $e_i$ are increasing with $i.$ We have $e_0 \geq \nu,$ otherwise $\alpha + 2e \not\supseteq e.$ Hence $\alpha + 2e \equiv 2^\nu + 2^{\nu+1} + \ldots + 2^{e_0} + \sum_{i \geq 1} 2^{e_i + 1} \mod 2^L.$ If $e$ has at least two non-zero digits, then $\alpha + 2e \not\supseteq e$ as $2^{e_1}$ appears in the right hand side but not the left hand side, and so $e>0$ implies that $e$ must be a power of two. So $\alpha + 2e \supseteq e$ if and only if $e \in \{0\} \cup \{2^j \, | \, \nu \leq j \leq L-1\}.$ Similarly, the composition factors of $S_2^{(\lambda \pm 2^\nu)}$ are $D_2^{(\lambda \pm (2^\nu + f))},$ where $\alpha + 2^{\nu+1} + 2f \supseteq_2 f.$ More precisely, we must have $f \in \{2^j - 2^\nu \, | \, \nu \leq j \leq L-1\}.$ Hence by \Cref{Young module uniserial if composition factors}, $Y_2^\lambda$ is uniserial.
    
    \item Assume $\alpha + 2^\nu \equiv 2^{L-1} \mod 2^L,$ and $2^{\nu+1} + 2^{L-1} > \lambda_2.$  Then $\alpha \equiv 2^\nu + 2^{\nu + 1} + \ldots 2^{L-2}$ mod $2^L.$ By \Cref{Young module filtered by Specht Zain}, $Y_2^\lambda$ has $S_2^{(\lambda \pm 2^{L-1})}$ in its filtration as $\alpha -1 + 2^{L-1} \supseteq_2 2^{L-1}.$ However, $\alpha + 2^L \not\supseteq_2 2^{L-1},$ so $D_2^{(\lambda \pm 2^{L-1})}$ does not appear as a composition factor of $S_2^\lambda.$ Hence by \Cref{Young module not a composition factors of the Specht}, $Y_2^\lambda$ is not uniserial. 

    \item Assume $\alpha - 2^\nu \equiv 0 \mod 2^L.$ If $L = \nu+1,$ then we actually have that $\alpha + 2^\nu \equiv 0 \mod 2^L,$ and is uniserial by the previous calculation. If $L > \nu+1,$ then $\alpha - 1 + 2^{\nu+1} \supseteq_2 2^{\nu+1},$ so by \Cref{Young module filtered by Specht Zain} $Y_2^\lambda$ has $S_2^{(\lambda \pm 2^{\nu+1})}$ in its filtration. However, $\alpha + 2^{\nu+2} \not\supseteq_2 2^{\nu+1},$ so $D_2^{(\lambda \pm 2^{\nu+1})}$ does not appear as a composition factor of $S_2^\lambda.$ Hence by \Cref{Young module not a composition factors of the Specht}, $Y_2^\lambda$ is not uniserial.
\end{enumerate}

Now for the case that $\lambda_1 = \lambda_2.$ We already have by \Cref{Young uniserial 2-singular} that $Y_2^\lambda$ is uniserial if and only if $\lambda_1 =1.$ As $\alpha = 1$ and $2^\nu = 1,$ we see that $\alpha + 2^\nu \equiv 2 \equiv 0 \mod 2^L$ if and only if $\lambda_1 = 1.$

\end{proof}

\newpage
\section{Hook Specht modules which are direct sums of uniserial summands}\label{Hook Specht modules which are direct sums of uniserial summands}

For $\lambda$ a hook partition of $n,$ it is known that if $n$ is even, then $S_2^\lambda$ is indecomposable \cite[Theorem 4.1]{murphy1980decomposability}. Additionally, if $n$ is odd, then hook Specht modules in characteristic $2$ decompose as a direct sum of $2$-part Young modules \cite{DonkinGeranios20201}. We give a preliminary definition and state the theorem precisely below.

\begin{definition}\cite[Definition 3.1]{DonkinGeranios20201}\label{def p-special}
    Let $r,b$ be integers with $r \geq 0$ and $p \geq 0.$ We shall say that the pair $(r,b)$ is $p$-special if
    \begin{enumerate}
        \item $p=0, -r \leq b \leq r$ and $r-b$ is even, or
        \item $p$ is a prime, $r$ has base $p$ expansion $r = \sum_{i=0}^\infty p^ir_i$ and there exists an expression $b = \sum_{i=0}^\infty p^it_i$ with $-r_i \leq t_i \leq r_i$ and $r_i - t_i$ even for all $i\geq 0.$
    \end{enumerate}
\end{definition}

The following theorem is stated in terms of Specht modules for the Iwahori-Hecke algebras.

\begin{theorem}\cite[Theorem 7.1.1]{DonkinGeranios20201}\label{Specht Decompose Young DG}
    Let $a,b \geq 1$ and assume that $a$ and $b$ have different parity. Then we have the following decompositions.
    \begin{enumerate}
        \item For $a$ even and $b$ odd with $a = 2+2u$ and $b = 2v+1$ we have \[S_2^{(a,1^b)} = \bigoplus Y_2^{(2+2c,1+2d)}\] where the sum is over all partitions $\mu = (c,d),$ such that $c+d = u+v$ and $(c-d,u-v)$ is $p$-special.
        \item For $a$ odd and $b$ even with $a=2u+1$ and $b=2v$ we have \[S_2^{(a,1^b)} = \bigoplus Y_2^{(1+2c,2d)}\] where the sum is over all partitions $\mu = (c,d),$ such that $c+d = u+v$ and $(c-d,u-v)$ is $p$-special.
    \end{enumerate}
\end{theorem}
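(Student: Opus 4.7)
The plan is to prove the decomposition by passing to polynomial representations of $GL_2$, where tilting theory is very rigid in rank one, and then descending to the symmetric group via the Schur functor $f \colon S(2,n)\text{-mod} \to kS_n\text{-mod}$. The guiding principles are that $f$ is exact, preserves direct sums, sends tilting modules of $S(2,n)$ to Young modules of $kS_n$, and sends Weyl/induced modules to (dual) Specht modules up to conjugation of the label. Hence any direct sum decomposition of an $S(2,n)$-module into tilting modules descends to a decomposition of its image into Young modules labelled by $2$-part partitions.

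First I would exhibit a preimage of $S_2^{(a,1^b)}$ under $f$ in a convenient form. Since the partition is a hook, the natural candidate is a tensor product of a Weyl module $\Delta(a-1)$ (or rather the degree-appropriate symmetric power) with an exterior power $\wedge^b(E)$, where $E$ is the natural $GL_2$-module. Schur--Weyl duality identifies the image of such a tensor product under $f$, after accounting for twists and conjugation conventions, with the hook Specht module $S_2^{(a,1^b)}$. The parity condition on $a$ and $b$ ensures $n$ is odd, which is the regime in which the preimage lies in a block whose indecomposable modules are all tilting.

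Next I would apply the iterated tilting tensor product rule for $GL_2$ in characteristic $p$. The key combinatorial fact is that $T(\mu) \otimes E$ splits as a direct sum of at most two indecomposable tilting modules, with the splitting pattern controlled digit-by-digit by the base-$p$ expansion of the highest weight. Iterating $b$ times (and handling $\wedge^2 E$, which only shifts weights) produces a decomposition of the preimage as $\bigoplus T(2+2c, 1+2d)$ (respectively $\bigoplus T(1+2c, 2d)$), where the indexing pairs $(c,d)$ are exactly those for which $(c-d, u-v)$ is $p$-special. The $p$-special condition is precisely the combinatorial criterion recording, at each digit of the base-$p$ expansion of $u+v$, which summands survive the iterated tensoring, in the spirit of Donkin's tilting tensor product formula.

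Finally, applying $f$ converts each tilting summand $T(\mu)$ into the Young module $Y_2^\mu$, giving the claimed direct sum. The main obstacle is the middle step: one must set up the preimage tensor product correctly (respecting the $\Delta$-versus-$\nabla$ and $\lambda$-versus-$\lambda'$ conventions), and then verify the iterated decomposition by an induction on $b$ using the explicit good filtrations of $GL_2$-tilting modules. Verifying that the resulting summands remain pairwise non-isomorphic and indecomposable after $f$ is applied is where the precise bookkeeping of $p$-special digits becomes essential, since it is exactly this condition that prevents degenerate collisions between the highest weights.
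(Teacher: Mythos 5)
First, a point of order: the paper does not prove this statement. It is quoted verbatim (as Theorem 7.1.1) from \cite{DonkinGeranios20201} and used as a black box, so there is no internal proof to measure your attempt against; what follows assesses your sketch on its own terms and against the strategy actually used in the cited reference.

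Your overall plan --- pass to polynomial representations, decompose a suitable preimage into indecomposable tilting modules, and push the decomposition down with the Schur functor --- is genuinely the right spirit and is close to how Donkin and Geranios argue. But as written there are concrete gaps. (a) The functor you invoke, $f\colon S(2,n)\text{-mod}\to \mathbb{F}S_n\text{-mod}$, does not exist for $n>2$: the Schur functor is the weight-space functor $e_{(1^n)}(-)$ and requires the Schur algebra $S(N,n)$ with $N\geq n$. The $GL_2$ combinatorics can only enter indirectly (via Ringel duality or truncation to two-column weights), and setting up that bridge is real work, not a convention issue. (b) The Schur functor sends the tilting module $T(\lambda)$ to the \emph{twisted} Young module $Y^{\lambda'}\otimes \mathrm{sgn}$, not to $Y^{\lambda}$; at $p=2$ the twist is trivial, but the conjugation is not, and it is precisely why two-part Young modules $Y_2^{(2+2c,1+2d)}$ must come from tilting modules labelled by two-\emph{column} partitions. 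Your sketch defers this to ``bookkeeping,'' but it changes which tilting tensor product identities are available. (c) Most seriously, the actual content of the theorem is the assertion that, under the stated parity hypothesis, the relevant preimage of $S_2^{(a,1^b)}$ \emph{is} a tilting module, so that its indecomposable summands are of the form $T(\mu)$. You justify this by saying the preimage ``lies in a block whose indecomposable modules are all tilting,'' which is not a correct statement for blocks of Schur algebras and in any case is not argued; the parity condition is a property of the specific module, not of its block, and proving it is where the difficulty lives. Finally, the identification of the admissible pairs $(c,d)$ with the $2$-special condition is only gestured at via ``the iterated tensor product rule''; without carrying out that induction (and checking no two surviving summands share a highest weight), the indexing set in the statement is not established.
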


We can specialise the result to the $p=2$ case, which takes us from Specht modules for the Iwahori-Hecke algebra to Specht modules for the symmetric groups, and combine the two parts into one result. Note that due to the self-duality of odd hook Specht modules \cite[Theorem 6.1]{kapadia2024submodule}, there is no loss of generality in assuming that the hook has longer arm than leg.

\begin{corollary}\label{Specht Decompose Young Zain}
    Let $a \geq b \geq 1$ be of different parity, $\lambda = (a,b)$ be a $2$-part partition of $n$ (that is, $a+b = n$), $\alpha := a-b+1,$ and define $D := \{ 0 \leq \delta \leq b \, | \, \alpha-2 + 2\delta \supseteq_2 \delta\}.$ Then \[S_2^{(a,1^b)} = \bigoplus_{\delta \in D} Y_2^{(\lambda \pm \delta)}.\]
\end{corollary}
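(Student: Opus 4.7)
The plan is to reindex the Donkin--Geranios decomposition \Cref{Specht Decompose Young DG} and to translate the $2$-special condition into the bitwise containment relation $\supseteq_2$.

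Since $a$ and $b$ have opposite parity, $\alpha = a - b + 1$ is even; this is the numerical feature that makes the translation work. In case~1 of \Cref{Specht Decompose Young DG} (with $a = 2 + 2u$, $b = 2v + 1$) and in case~2 (with $a = 2u+1$, $b = 2v$), each summand $Y_2^{(2+2c, 1+2d)}$ or $Y_2^{(1+2c, 2d)}$ indexed by $c + d = u + v$ can be rewritten as $Y_2^{(\lambda \pm \delta)}$ by setting $\delta := 2(c - u) = 2(v - d)$; a short computation confirms that $\lambda \pm \delta = (a + \delta, b - \delta)$ matches the Young module index in both cases. Since $a \geq b$ forces $u \geq v$, the constraint $c \geq d$ (needed for the index to be a partition) keeps $\delta$ within $\delta \leq 2v \leq b$.

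The main step is translating the $2$-special condition on the pair $(c-d, u-v)$. Unpacking \Cref{def p-special} for $p = 2$, and writing $c - d = \sum_i 2^i r_i$ in standard binary, the condition requires $u - v = \sum_i 2^i t_i$ with $t_i \in \{-r_i, \ldots, r_i\}$ and $r_i - t_i$ even. Since $r_i \in \{0, 1\}$, this forces $t_i = 0$ whenever $r_i = 0$ and $t_i = \pm 1$ whenever $r_i = 1$. Choosing $\epsilon_i \in \{0, 1\}$ for each bit with $r_i = 1$, one writes $u - v = (c - d) - 2 \sum_i 2^i \epsilon_i r_i$, so $\delta = (c - d) - (u - v)$ must be even and $\delta/2$ must be bitwise contained in $c - d$. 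This is equivalent to $2(c - d) \supseteq_2 \delta$, which is exactly $\alpha - 2 + 2\delta \supseteq_2 \delta$, using $2(c - d) = 2(u - v) + 2\delta = (\alpha - 2) + 2\delta$.

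Finally, I would verify that the ranges match. The $2$-special condition also requires $|u - v| \leq c - d$, i.e., $c \geq u$ (given $u \geq v$), giving $\delta \geq 0$. Conversely, any $\delta$ satisfying $0 \leq \delta \leq b$ together with $\alpha - 2 + 2\delta \supseteq_2 \delta$ arises from a valid pair $(c, d)$: the containment forces $\delta$ to be even (since $\alpha - 2$ is even), which places $\delta$ in $\{0, 2, \ldots, 2v\}$ even in case~1 where $b = 2v + 1$. The main obstacle is unpacking the $2$-special definition and recognising it as a bitwise containment condition shifted by one position; once that translation is made, the rest is bookkeeping between the two parameterisations.
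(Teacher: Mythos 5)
Your proposal is correct and follows essentially the same route as the paper: reindex the Donkin--Geranios decomposition by $\delta = 2(c-u) = 2(v-d)$, check that $(2+2c,1+2d)$ resp.\ $(1+2c,2d)$ becomes $\lambda \pm \delta$, and unpack the digit-by-digit $2$-special condition on $(c-d,u-v)$ into $c-d \supseteq_2 \delta/2$, i.e.\ $\alpha - 2 + 2\delta \supseteq_2 \delta$. The only cosmetic difference is that you treat the two parity cases uniformly where the paper writes them out separately, and your range bookkeeping (the bound $\delta \le 2v$ really comes from $d \ge 0$ rather than $c \ge d$) is a harmless slip.
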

\begin{proof}
    Assume $a$ is even and $b$ is odd, with $a = 2 + 2u$ and $b = 2v+1$. By \Cref{Specht Decompose Young DG}, we can decompose \[S_2^{(a,1^b)} = \bigoplus Y_2^{(2+2c,1+2d)}\] where the sum is over all partitions $(c,d)$ such that $c+d = u+v$ and $(c-d,u-v)$ is $2$-special. As $(2+2c,1+2d) \trianglerighteq (a,b),$ we can write $(2+2c,1+2d)$ as $(a+\delta, b-\delta),$ where $\delta = 2+2c-a = b-2d-1,$ and we have $c = \frac{a+\delta-2}{2}, d = \frac{b-\delta-1}{2},$ and so $c-d = \frac{a-b-1+2\delta}{2} = \frac{\alpha-2+2\delta}{2}.$ Similarly, we can write $u = \frac{a-2}{2}, v = \frac{b-1}{2}$ and $u-v = \frac{a-b-1}{2} = \frac{\alpha-2}{2}.$ So the direct sum is over all $\delta$ such that $0 \leq \delta \leq b$ and $(c-d,u-v) = (\frac{\alpha-2+2\delta}{2},\frac{\alpha-2}{2})$ is $2$-special.

    In the case that $a$ is odd and $b$ is even, with $a = 2u+1$ and $b = 2v$. Similarly, by \Cref{Specht Decompose Young DG}, we can decompose \[S_2^{(a,1^b)} = \bigoplus Y_2^{(1+2c,2d)}\] where the sum is over all partitions $(c,d)$ such that $c+d = u+v$ and $(c-d,u-v)$ is $2$-special. As $(1+2c,2d) \trianglerighteq (a,b),$ we can write $(1+2c,2d)$ as $(a+\delta, b-\delta),$ where $\delta = 1+2c-a = b-2d,$ and we have $c = \frac{a+\delta-1}{2}, d = \frac{b-\delta}{2},$ and so $c-d = \frac{a-b-1+2\delta}{2} = \frac{\alpha-2+2\delta}{2}.$ Similarly, we can write $u = \frac{a-1}{2}, v = \frac{b}{2}$ and $u-v = \frac{a-b-1}{2} = \frac{\alpha-2}{2}.$ So the direct sum is over all $\delta$ such that $0 \leq \delta \leq b$ and $(c-d,u-v) = (\frac{\alpha-2+2\delta}{2},\frac{\alpha-2}{2})$ is $2$-special.

    By \Cref{def p-special}, this is true if and only if $(\frac{\alpha-2+2\delta}{2})$ has a base $2$ expansion $\sum_{i=0}^\infty 2^ir_i,$ and there exists an expression of $\frac{\alpha-2}{2} = \sum_{i=0}^\infty2^it_i$ where $t_i = 0$ if $r_i = 0,$ and $t_i \in \{1,-1\}$ if $r_i = 1.$ Subtracting the two equations, we get $\delta = \sum_{i=0}^\infty 2^i(r_i - t_i) = \sum_{i=0}^\infty d_i,$ where $d_i = 0$ if $t_i = r_i$ and $d_i = 2$ if $t_i = -1.$ Hence, we have a binary expression for $\delta/2,$ and if there is a $1$ in the $i$th column of $\delta,$ we have $t_i = -1,$ hence $r_i = 1.$ Therefore $\frac{\alpha - 2 + 2\delta}{2} \supseteq_2 \delta/2,$ which is true if and only if $\alpha - 2 + 2\delta \supseteq_2 \delta.$ 
    
\end{proof}

\begin{remark}
With this corollary, we also recover the statement by Murphy \cite[Theorem 4.5]{murphy1980decomposability} that if $n$ is odd, then $S_2^{(\lambda_1, 1^{\lambda_2})}$ is indecomposable if and only if $\alpha - 2 \equiv 0 \mod 2^L.$
\end{remark}
We now have all of the results needed to prove our main theorem.

\begin{theorem}\label{classification_of_DSoUS_hooks}
    Let $\lambda = (\lambda_1, \lambda_2)$ be a partition of an odd integer $n,$ let $\alpha := \lambda_1 - \lambda_2 + 1, \nu := \nu_2(\alpha)$ and $L := L_2(\lambda_2).$ Then the hook Specht module in characteristic $2, S_2^{(\lambda_1, 1^{\lambda_2})}$ is a direct sum of uniserial summands if and only if $\alpha + 2^\nu \equiv 0 \mod 2^L$ and either
    \begin{itemize}
        \item $\nu = 1$ and $\lambda_2 \leq 11$ or;
        \item $\nu = 2$ and $\lambda_2 \leq 21$ or;
        \item $\nu = 3$ and $\lambda_2 \leq 25$ or;
        \item $\nu \geq 4$ and $\lambda_2 \leq 9.$
    \end{itemize}
\end{theorem}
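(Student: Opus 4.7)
The plan is to apply \Cref{Specht Decompose Young Zain} to decompose
\[S_2^{(\lambda_1, 1^{\lambda_2})} = \bigoplus_{\delta \in D} Y_2^{(\lambda \pm \delta)},\]
where $D = \{0 \leq \delta \leq \lambda_2 \, | \, \alpha - 2 + 2\delta \supseteq_2 \delta\}$, and then apply \Cref{Uniserial 2-part Young Modules} to each summand. A direct sum is a direct sum of uniserial summands if and only if each summand is individually uniserial, so the problem reduces to checking, for every $\delta \in D$, whether $(\alpha + 2\delta) + 2^{\nu_2(\alpha + 2\delta)} \equiv 0 \pmod{2^{L_2(\lambda_2 - \delta)}}$.

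For the necessary direction, the fact that $0 \in D$ always (the relation $\alpha - 2 \supseteq_2 0$ is trivial) forces $Y_2^\lambda$ itself to be uniserial, which by \Cref{Uniserial 2-part Young Modules} gives the first stated condition $\alpha + 2^\nu \equiv 0 \pmod{2^L}$. This condition pins the low bits of $\alpha$ modulo $2^L$ to the pattern $\underbrace{1\cdots 1}_{L-\nu}\underbrace{0\cdots 0}_{\nu}$ (or $0$ if $\nu \geq L$), which in turn heavily restricts which small $\delta$ can lie in $D$. To force the $\lambda_2$ bound, I pinpoint, for each $\nu$, a specific critical $\delta > 0$ in $D$ that becomes problematic as soon as $\lambda_2$ crosses the threshold, because $L_2(\lambda_2 - \delta)$ jumps to a value incompatible with the uniseriality congruence. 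For instance, for $\nu = 1$ with $\lambda_2 \geq 12$ we have $\delta = 4 \in D$; a short calculation then gives $\alpha + 8 \equiv 6 \pmod{16}$, so $\nu_2(\alpha + 8) = 1$ while $L_2(\lambda_2 - 4) = 4$, and the required congruence fails. Analogous critical elements $\delta$ (with $\delta = 6$ for $\nu = 2$, $\delta = 10$ for $\nu = 3$, $\delta = 2$ for $\nu \geq 4$) drive the other three thresholds.

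For the sufficient direction, the bound on $\lambda_2$ in each case keeps $D$ small and fully enumerable. One verifies, for every $\delta \in D$, that the prescribed bit pattern of $\alpha$ together with the bound on $\lambda_2$ forces the uniseriality congruence, by directly computing $\nu_2(\alpha + 2\delta)$ from the pattern and bounding $L_2(\lambda_2 - \delta)$ from above. The main obstacle will be the bookkeeping in this case analysis: the thresholds $11, 21, 25, 9$ do not arise from a single formula but from delicate interactions between the binary expansions of $\alpha$, $\delta$ and $\lambda_2 - \delta$, so each of the four cases on $\nu$ must be handled with its own enumeration of $D$. The cases $\nu \in \{2, 3\}$ are the most intricate because $L$ is allowed to reach $5$; in contrast, the uniform bound $\lambda_2 \leq 9$ for $\nu \geq 4$ reflects that $L \leq 4$ suffices, making that case easier to package uniformly, but still requires verifying that the $\nu \geq L$ regime (where $Y_2^\lambda$ is itself simple) and the $\nu < L$ regime are both accounted for.
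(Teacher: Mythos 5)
Your proposal is correct and follows essentially the same route as the paper: decompose via \Cref{Specht Decompose Young Zain}, reduce to testing the congruence of \Cref{Uniserial 2-part Young Modules} for each $\delta$ with $\alpha - 2 + 2\delta \supseteq_2 \delta$, enumerate these $\delta$ case-by-case in $\nu$ for sufficiency, and exhibit the same critical witnesses ($\delta = 4, 6, 10, 2$ for $\nu = 1, 2, 3, \geq 4$ respectively) for necessity. The only cosmetic slip is writing $L_2(\lambda_2 - 4) = 4$ where you need $L_2(\lambda_2 - 4) \geq 4$ (which still suffices), so nothing substantive separates your argument from the paper's.
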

\begin{proof}
    By \Cref{Specht Decompose Young Zain}, we know that \[S_2^{(\lambda_1, 1^{\lambda_2})} \isom \bigoplus Y_2^{(\lambda \pm d)},\] where the sum is over $0 \leq d \leq \lambda_2$ such that $\alpha - 2 + 2d \supseteq_2 d.$ For a fixed $d,$ define $\alpha' := \alpha +2d,\nu' := \nu_2(\alpha + 2d),$ and $L' := L_2(\lambda_2 - d).$

    \begin{itemize}
        \item First assume that $\alpha + 2^\nu \equiv 0 \mod 2^L, \nu = 1$ and $\lambda_2 \leq 11.$ We need to show that for all $d$ such that $0 \leq d \leq \lambda_2$ and $\alpha -2 + 2d \supseteq_2 d,$ we have that $Y_2^{(\lambda \pm d)}$ is uniserial. By \Cref{Uniserial 2-part Young Modules}, we have that $Y_2^\lambda$ is uniserial. We also know that $\alpha \equiv 2^1 + 2^2 + \ldots + 2^{L-1}$ mod $2^L,$ so $\alpha - 2 \equiv 2^2 + 2^3 + \ldots + 2^{L-1}$ mod $2^L,$ so $\alpha -2 + 2d \supseteq d$ if and only if $d \in \{0, 4,8\}$ and $d \leq \lambda_2.$ 
        \begin{itemize}
            \item If $d = 4\leq\lambda_2,$ then $\alpha' \equiv 6$ mod $2^L, \nu' = 1$ so $\alpha' + 2^{\nu'} \equiv 8 \equiv 0 \mod 2^{L'},$ as $\lambda_2 - 4 \leq 7,$ so $2^{L'} \leq 8.$ Hence $Y_2^{(\lambda \pm 4)}$ is uniserial.
            
            \item If $d = 8\leq\lambda_2,$ then $\alpha' \equiv 14$ mod $2^L, \nu' = 1$ so $\alpha' + 2^{\nu'} \equiv 16 \equiv 0 \mod 2^{L'},$ as $\lambda_2 - 8 \leq 3,$ so $2^{L'} \leq 4.$ Hence $Y_2^{(\lambda \pm 8)}$  is uniserial
        \end{itemize}
        Therefore $S_2^{(\lambda_1, 1^{\lambda_2})}$ is a direct sum of uniserial summands.

        \item Now assume that $\alpha + 2^\nu \equiv 0 \mod 2^L, \nu = 2$ and $\lambda_2 \leq 21.$ As before, we need to show that for all $d$ such that $0 \leq d \leq \lambda_2$ and $\alpha -2 + 2d \supseteq_2 d,$ we have that $Y_2^{(\lambda \pm d)}$ is uniserial, and by \Cref{Uniserial 2-part Young Modules}, we have that $Y_2^\lambda$ is uniserial. We also know that $\alpha \equiv 2^2 + 2^3 + \ldots + 2^{L-1}$ mod $2^L,$ so $\alpha - 2 \equiv 2^1 + 2^3 + 2^4 + \ldots + 2^{L-1}$ mod $2^L,$ so $\alpha - 2 + 2d \supseteq_2 d$ if and only if $d \in \{0, 2, 6, 8, 10, 16, 18\}$ and $d \leq \lambda_2.$
        \begin{itemize}
            \item If $d=2\leq \lambda_2,$ then $\alpha' \equiv 0$ mod $2^L,$ so $\alpha' + 2^{\nu'} \equiv 0$ mod $2^{L'}$ as required, and $Y_2^{(\lambda \pm 2)}$ is uniserial.
            
            \item If $d= 6\leq\lambda_2,$ then $\alpha' \equiv 8 \mod 2^L, \nu' = 3$ so $\alpha' + 2^{\nu'} \equiv 16 \equiv 0 $ mod $2^{L'},$ as $\lambda_2 - 6 \leq 15,$ so $2^{L'} \leq 16.$ Hence $Y_2^{(\lambda \pm 6)}$ is uniserial.

            \item If $d=8\leq \lambda_2,$ then $\alpha' \equiv 12 \mod 2^L, \nu' = 2$ so $\alpha' + 2^{\nu'} \equiv 16 \equiv 0$ mod $2^{L'},$ as $\lambda_2 - 8 \leq 13,$ so $2^{L'} \leq 16.$ Hence $Y_2^{(\lambda \pm 8)}$ is uniserial.

            \item If $d=10\leq \lambda_2,$ then $\alpha' \equiv 16 \mod 2^L, \nu' = 4$ so $\alpha' + 2^{\nu'} \equiv 32 \equiv 0$ mod $2^{L'},$ as $\lambda_2 - 10 \leq 11,$ so $2^{L'} \leq 16.$ Hence $Y_2^{(\lambda \pm 10)}$ is uniserial.

            \item If $d=16\leq \lambda_2,$ then $\alpha' \equiv 28 \mod 2^L, \nu' = 2$ so $\alpha' + 2^{\nu'} \equiv 32 \equiv 0$ mod $2^{L'},$ as $\lambda_2 - 16 \leq 5,$ so $2^{L'} \leq 8.$ Hence $Y_2^{(\lambda \pm 16)}$ is uniserial.

            \item If $d=18\leq \lambda_2,$ then $\alpha' \equiv 32 \mod 2^L,$ but $\lambda_2 \leq 21$ so $2^L \leq 32,$ so $\alpha' \equiv 0 \mod 2^L$ and so $\alpha' + 2^{\nu'} \equiv 0 \mod 2^{L'}.$ Hence $Y_2^{(\lambda \pm 18)}$ is uniserial.
        \end{itemize}
        Therefore $S_2^{(\lambda_1, 1^{\lambda_2})}$ is a direct sum of uniserial summands.

        \item Now assume that $\alpha + 2^\nu \equiv 0 \mod 2^L, \nu = 3$ and $\lambda_2 \leq 25.$ As before, we need to show that for all $d$ such that $0 \leq d \leq \lambda_2$ and $\alpha -2 + 2d \supseteq_2 d,$ we have that $Y_2^{(\lambda \pm d)}$ is uniserial, and by \Cref{Uniserial 2-part Young Modules}, we have that $Y_2^\lambda$ is uniserial. We also know that $\alpha \equiv 2^3 + 2^4 + \ldots + 2^{L-1}$ mod $2^L,$ so $\alpha - 2 \equiv 2^1 + 2^2 + 2^4 + 2^5 + \ldots + 2^{L-1}$ mod $2^L,$ so $\alpha - 2 + 2d \supseteq_2 d$ if and only if $d \in \{0, 2, 4, 10, 12, 18, 20\}$ and $d \leq \lambda_2.$

        \begin{itemize}
            \item If $d=2 \leq \lambda_2,$ then $\alpha' \equiv 2^2 + 2^3 + \ldots + 2^{L-1} \mod 2^L,\nu' =2$ so $\alpha' + 2^{\nu'} \equiv 0 \mod 2^L,$ so is equivalent to $0$ mod $2^{L'}.$ Hence $Y_2^{(\lambda \pm 2)}$ is uniserial.

            \item If $d=4\leq \lambda_2,$ then $\alpha' \equiv 0 \mod 2^L,$ so $\alpha' + 2^{\nu'} \equiv 0 \mod 2^{L'}.$ Hence $Y_2^{(\lambda \pm 4)}$ is uniserial.

            \item If $d = 10 \leq \lambda_2,$ then $\alpha' \equiv 12 \mod 2^L, \nu' = 2$ so $\alpha' + 2^{\nu'} \equiv 16 \equiv \mod 2^{L'},$ as $\lambda_2 -10 \leq 15,$ so $2^{L'} \leq 16.$ Hence $Y_2^{(\lambda \pm 10)}$ is uniserial.

            \item If $d = 12 \leq \lambda_2,$ then $\alpha' \equiv 16 \mod 2^L,$ so $\alpha' + 2^{\nu'} \equiv 0 \mod 2^{L'}$ as $\lambda_2 - 12 \leq 13,$ so $2^{L'} \leq 16.$ Hence $Y_2^{(\lambda \pm 12)}$ is uniserial.

            \item If $d = 18\leq \lambda_2,$ then $\alpha' \equiv 28 \mod 2^L, \nu' = 2$ so $\alpha' + 2^{\nu'} \equiv 32 \equiv 0 \mod 2^{L'},$ as $\lambda_2 - 18 \leq 7,$ so $2^{L'} \leq 8.$ Hence $Y_2^{(\lambda \pm 18)}$ is uniserial.

            \item If $d = 20 \leq \lambda_2,$ then $\alpha' \equiv 32 \equiv \mod 2^L$ as $\lambda_2 \leq 25$ so $2^{L} \leq 32.$ So $\alpha' + 2^{\nu'} \equiv 0 \mod 2^{L'}.$ Hence $Y_2^{(\lambda \pm 20)}$ is uniserial.
        \end{itemize}
        Therefore $S_2^{(\lambda_1, 1^{\lambda_2})}$ is a direct sum of uniserial summands.
    \end{itemize}

    \item Now assume that $\alpha + 2^\nu \equiv 0 \mod 2^L, \nu \geq 4$ and $\lambda_2 \leq 9.$ As before, we need to show that for all $d$ such that $0 \leq d \leq \lambda_2$ and $\alpha -2 + 2d \supseteq_2 d,$ we have that $Y_2^{(\lambda \pm d)}$ is uniserial, and by \Cref{Uniserial 2-part Young Modules}, we have that $Y_2^\lambda$ is uniserial. We also know that $\alpha \equiv 0$ mod $2^L,$ so $\alpha - 2 \equiv 2^1 + 2^2 + 2^3$ mod $2^L,$ so $\alpha - 2 + 2d \supseteq_2 d$ if and only if $d \in \{0, 2, 4, 8\}$ and $d \leq \lambda_2.$

    \begin{itemize}
        \item If $d =2 \leq \lambda_2,$ then $\alpha' \equiv 4 \mod 2^L, \nu' = 2$ so $\alpha' + 2^{\nu'} \equiv 8 \equiv 0 \mod 2^{L'},$ as $\lambda_2 - 2 \leq 7,$ so $2^{L'} \leq 8.$ Hence $Y_2^{(\lambda \pm 2)}$ is uniserial.

        \item If $d = 4 \leq \lambda_2,$ then $\alpha' \equiv 8 \mod 2^L, \nu' = 3$ so $\alpha' + 2^{\nu'} \equiv 16 \equiv 0 \mod 2^{L'},$ as $\lambda_2 - 4 \leq 5,$ so $2^{L'} \leq 8.$ Hence $Y_2^{(\lambda \pm 4)}$ is uniserial.

        \item If $d= 8 \leq \lambda_2,$ then $\alpha' \equiv 16 \equiv 0 \mod 2^L$ as $\lambda_2 \leq 9$ so $2^L \leq 16.$ So $\alpha' + 2^{\nu'} \equiv 0 \mod 2^{L'}.$ Hence $Y_2^{(\lambda_2 \pm 8)}$ is uniserial.
    \end{itemize}
    Therefore $S_2^{(\lambda_1, 1^{\lambda_2})}$ is a direct sum of uniserial summands.

    We now prove the converse. Clearly, if $\alpha + 2^{\nu} \not\equiv 0 \mod 2^L,$ then $Y_2^\lambda$ is not uniserial, so $S_2^{(\lambda_1, 1^{\lambda_2})}$ is not a direct sum of uniserial summands. So we can assume that $\alpha + 2^\nu \equiv 0 \mod 2^L.$ We show that for each value of $\nu,$ if $\lambda_2$ does not satisfy the appropriate inequality, then we get a non-uniserial Young module in the decomposition of $S_2^{(\lambda_1, 1^{\lambda_2})}.$

    \begin{itemize}
        \item First assume that $\nu = 1$ and $\lambda_2 \geq 12.$ Then $\alpha \equiv 2^1 + 2^2 + \ldots + 2^{L-1} \mod 2^L,$ so $\alpha - 2 \equiv 2^2 + 2^3 + \ldots + 2^{L-1} \mod 2^L,$ so $\alpha -2 + 2(4) \supseteq_2 4$ and $Y_2^{(\lambda \pm 4)}$ is a summand of $S_2^{(\lambda_1, 1^{\lambda_2})}.$ However, $\alpha' \equiv 6 \mod 2^L, \nu = 1,$ so $\alpha' + 2^{\nu'} \equiv 8 \not\equiv 0 \mod 2^{L'},$ as $\lambda_2 - 4 \geq 8,$ so $2^{L'} \geq 16.$ Hence $Y_2^{(\lambda \pm 4)}$ is not uniserial, and $S_2^{(\lambda_1, 1^{\lambda_2})}$ is not a direct sum of uniserial summands.

        \item Now assume that $\nu = 2$ and $\lambda_2 \geq 22.$ Then $\alpha \equiv 2^2 + 2^3 + \ldots + 2^{L-1} \mod 2^L,$ so $\alpha - 2 \equiv 2^1 + 2^3 + 2^4 + \ldots + 2^{L-1} \mod 2^L,$ so $\alpha - 2 + 2(6) \supseteq_2 6$ and $Y_2^{(\lambda \pm 6)}$ is a summand of $S_2^{(\lambda_1, 1^{\lambda_2})}.$ However, $\alpha' \equiv 8 \mod 2^L, \nu = 3,$ so $\alpha' + 2^{\nu'} \equiv 16 \not\equiv 0 \mod 2^{L'},$ as $\lambda_2 - 6 \geq 16,$ so $2^{L'} \geq 32.$ Hence $Y_2^{(\lambda \pm 6)}$ is not uniserial, and $S_2^{(\lambda_1, 1^{\lambda_2})}$ is not a direct sum of uniserial summands.

        \item Now assume that $\nu = 3$ and $\lambda_2 \geq 26.$ Then $\alpha \equiv 2^3 + 2^4 + \ldots + 2^{L-1} \mod 2^L,$ so $\alpha - 2 \equiv 2^1 + 2^2 + 2^4 + 2^5 + \ldots + 2^{L-1} \mod 2^L,$ so $\alpha -2 + 2(10) \supseteq_2 10$ and $Y_2^{(\lambda \pm 10)}$ is a summand of $S_2^{(\lambda_1, 1^{\lambda_2})}.$ However, $\alpha' \equiv 12 \mod 2^L, \nu = 2$ so $\alpha' + 2^{\nu'} \equiv 16 \not\equiv 0 \mod 2^{L'}$ as $\lambda_2 - 10 \geq 16,$ so $2^{L'} \geq 32.$ Hence $Y_2^{(\lambda \pm 10)}$ is not uniserial, and $S_2^{(\lambda_1, 1^{\lambda_2})}$ is not a direct sum of uniserial summands.

        \item Now assume that $\nu \geq 4$ and $\lambda_2 \geq 10.$ Then $\alpha \equiv 2^4 + 2^5 + \ldots + 2^{L-1} \mod 2^L,$ so $\alpha - 2 \equiv 2^1 + 2^2 + 2^3 + 2^5 + 2^6 + \ldots + 2^{L-1} \mod 2^L,$ so $\alpha - 2 + 2(2) \supseteq_2 2$ and $Y_2^{(\lambda \pm 2)}$ is a direct summand of $S_2^{(\lambda_1, 1^{\lambda_2})}.$ However, $\alpha' \equiv 2^2 + 2^4 + 2^5 + \ldots + 2^{L-1} \mod 2^L, \nu = 2$  so $\alpha' + 2^{\nu'} \equiv 2^3 + 2^4 + \ldots + 2^{L-1} \not\equiv 0 \mod 2^{L'}$ as $\lambda_2 - 2 \geq 8$ so $2^{L'} \geq 16.$ Hence $Y_2^{(\lambda \pm 2)}$ is not uniserial, and $S_2^{(\lambda_1, 1^{\lambda_2})}$ is not a direct sum of uniserial summands.
    \end{itemize}

    Hence the required result.
\end{proof}

\begin{example}
    By \Cref{classification_of_DSoUS_hooks}, we see that if the hook Specht module is a direct sum of uniserial summands, then the maximum number of summands is seven; when $\nu =2$ and $18 \leq \lambda_2 \leq 21,$ or when $\nu = 3$ and $20 \leq \lambda_2 \leq 25.$ We take the maximal value for $\lambda_2$ in either case, and give the full submodule structure of each of the uniserial direct summands below. 

    In the diagrams below, the vertices represent submodules of $Y_2^\lambda$. The directed edge from a vertex $u$ to a vertex $v$ indicates that the submodule associated to $u$ is a maximal submodule of the submodule associated to $v,$ and the edges are labelled by $d$ such that $D_2^{(\lambda \pm d)}$ is the irreducible subquotient.

    The Specht module $S_2^{(48,1^{21})}$ has $\lambda = (48,21), \alpha = 48 - 21 + 1 = 28, \nu = 2, L = 32, \alpha + 2^\nu \equiv 0 \mod 2^L,$ and $\lambda_2 \leq 21.$ Hence $S_2^{(48, 1^{21})}$ is a direct sum of the following seven uniserial Young modules.

    \resizebox{1\textwidth}{!}{
\begin{tabular}{|c|c|c|c|}
     \hline
\begin{tikzpicture}[main/.style = {draw, circle}, scale = 1.5]
\node at (0,-1/2) {The submodule lattice for $Y_2^{( 48,21 )}$};

\node[main] (0) at (0,0) {};
\node[main] (1) at (0,1) {};
\node[main] (2) at (0,2) {};
\node[main] (3) at (0,3) {};
\node[main] (4) at (0,4) {};
\node[main] (5) at (0,5) {};
\node[main] (6) at (0,6) {};
\node[main] (7) at (0,7) {};

\draw[->] (0) edge ["$4$"', pos = 0.5] (1);
\draw[->] (1) edge ["$8$"', pos = 0.5] (2);
\draw[->] (2) edge ["$16$"', pos = 0.5] (3);
\draw[->] (3) edge ["$0$"', pos = 0.5] (4);
\draw[->] (4) edge ["$16$"', pos = 0.5] (5);
\draw[->] (5) edge ["$8$"', pos = 0.5] (6);
\draw[->] (6) edge ["$4$"', pos = 0.5] (7);

\end{tikzpicture}
     &
\begin{tikzpicture}[main/.style = {draw, circle}, scale = 1.5]
\node at (0,-1/2) {The submodule lattice for $Y_2^{( 48,21 )\pm 2}$};

\node[main] (0) at (0,0) {};
\node[main] (1) at (0,1) {};

\draw[->] (0) edge ["$2$"', pos = 0.5] (1);

\end{tikzpicture}
     &
\begin{tikzpicture}[main/.style = {draw, circle}, scale = 1.5]
\node at (0,-1/2) {The submodule lattice for $Y_2^{( 48,21 )\pm 6}$};

\node[main] (0) at (0,0) {};
\node[main] (1) at (0,1) {};
\node[main] (2) at (0,2) {};
\node[main] (3) at (0,3) {};

\draw[->] (0) edge ["$14$"', pos = 0.5] (1);
\draw[->] (1) edge ["$6$"', pos = 0.5] (2);
\draw[->] (2) edge ["$14$"', pos = 0.5] (3);

\end{tikzpicture}
     &
\begin{tikzpicture}[main/.style = {draw, circle}, scale = 1.5]
\node at (0,-1/2) {The submodule lattice for $Y_2^{( 48,21 )\pm 8}$};

\node[main] (0) at (0,0) {};
\node[main] (1) at (0,1) {};
\node[main] (2) at (0,2) {};
\node[main] (3) at (0,3) {};
\node[main] (4) at (0,4) {};
\node[main] (5) at (0,5) {};

\draw[->] (0) edge ["$12$"', pos = 0.5] (1);
\draw[->] (1) edge ["$16$"', pos = 0.5] (2);
\draw[->] (2) edge ["$8$"', pos = 0.5] (3);
\draw[->] (3) edge ["$16$"', pos = 0.5] (4);
\draw[->] (4) edge ["$12$"', pos = 0.5] (5);

\end{tikzpicture}
     \\ 
     \hline
\begin{tikzpicture}[main/.style = {draw, circle}, scale = 1.5]
\node at (0,-1/2) {The submodule lattice for $Y_2^{( 48,21 )\pm 10}$};

\node[main] (0) at (0,0) {};
\node[main] (1) at (0,1) {};

\draw[->] (0) edge ["$10$"', pos = 0.5] (1);

\end{tikzpicture}
     &
\begin{tikzpicture}[main/.style = {draw, circle}, scale = 1.5]
\node at (0,-1/2) {The submodule lattice for $Y_2^{( 48,21 )\pm 16}$};

\node[main] (0) at (0,0) {};
\node[main] (1) at (0,1) {};
\node[main] (2) at (0,2) {};
\node[main] (3) at (0,3) {};

\draw[->] (0) edge ["$20$"', pos = 0.5] (1);
\draw[->] (1) edge ["$16$"', pos = 0.5] (2);
\draw[->] (2) edge ["$20$"', pos = 0.5] (3);

\end{tikzpicture}
     &
\begin{tikzpicture}[main/.style = {draw, circle}, scale = 1.5]
\node at (0,-1/2) {The submodule lattice for $Y_2^{( 48,21 )\pm 18}$};

\node[main] (0) at (0,0) {};
\node[main] (1) at (0,1) {};

\draw[->] (0) edge ["$18$"', pos = 0.5] (1);

\end{tikzpicture}
     &
\begin{tikzpicture}

\end{tikzpicture}
     \\
     \hline
\end{tabular}
}
\\
The Specht module $S_2^{(48,1^{25})}$ has $\lambda = (48,25), \alpha = 48 - 25 + 1 = 24, \nu = 3, L = 32, \alpha + 2^\nu \equiv 0 \mod 2^L,$ and $\lambda_2 \leq 25.$ Hence $S_2^{(48, 1^{25})}$ is a direct sum of the following seven uniserial Young modules.
\\
\resizebox{1\textwidth}{!}{
\begin{tabular}{|c|c|c|c|}
     \hline
\begin{tikzpicture}[main/.style = {draw, circle}, scale = 1.5]
\node at (0,-1/2) {The submodule lattice for $Y_2^{( 48,25 )}$};

\node[main] (0) at (0,0) {};
\node[main] (1) at (0,1) {};
\node[main] (2) at (0,2) {};
\node[main] (3) at (0,3) {};
\node[main] (4) at (0,4) {};
\node[main] (5) at (0,5) {};

\draw[->] (0) edge ["$8$"', pos = 0.5] (1);
\draw[->] (1) edge ["$16$"', pos = 0.5] (2);
\draw[->] (2) edge ["$0$"', pos = 0.5] (3);
\draw[->] (3) edge ["$16$"', pos = 0.5] (4);
\draw[->] (4) edge ["$8$"', pos = 0.5] (5);

\end{tikzpicture}
     &
\begin{tikzpicture}[main/.style = {draw, circle}, scale = 1.5]
\node at (0,-1/2) {The submodule lattice for $Y_2^{( 48,25 )\pm 2}$};

\node[main] (0) at (0,0) {};
\node[main] (1) at (0,1) {};
\node[main] (2) at (0,2) {};
\node[main] (3) at (0,3) {};
\node[main] (4) at (0,4) {};
\node[main] (5) at (0,5) {};
\node[main] (6) at (0,6) {};
\node[main] (7) at (0,7) {};

\draw[->] (0) edge ["$6$"', pos = 0.5] (1);
\draw[->] (1) edge ["$10$"', pos = 0.5] (2);
\draw[->] (2) edge ["$18$"', pos = 0.5] (3);
\draw[->] (3) edge ["$0$"', pos = 0.5] (4);
\draw[->] (4) edge ["$18$"', pos = 0.5] (5);
\draw[->] (5) edge ["$10$"', pos = 0.5] (6);
\draw[->] (6) edge ["$6$"', pos = 0.5] (7);

\end{tikzpicture}
     &
\begin{tikzpicture}[main/.style = {draw, circle}, scale = 1.5]
\node at (0,-1/2) {The submodule lattice for $Y_2^{( 48,25 )\pm 4}$};

\node[main] (0) at (0,0) {};
\node[main] (1) at (0,1) {};

\draw[->] (0) edge ["$0$"', pos = 0.5] (1);

\end{tikzpicture}
     &
\begin{tikzpicture}[main/.style = {draw, circle}, scale = 1.5]
\node at (0,-1/2) {The submodule lattice for $Y_2^{( 48,25 )\pm 10}$};

\node[main] (0) at (0,0) {};
\node[main] (1) at (0,1) {};
\node[main] (2) at (0,2) {};
\node[main] (3) at (0,3) {};
\node[main] (4) at (0,4) {};
\node[main] (5) at (0,5) {};

\draw[->] (0) edge ["$14$"', pos = 0.5] (1);
\draw[->] (1) edge ["$18$"', pos = 0.5] (2);
\draw[->] (2) edge ["$10$"', pos = 0.5] (3);
\draw[->] (3) edge ["$18$"', pos = 0.5] (4);
\draw[->] (4) edge ["$14$"', pos = 0.5] (5);

\end{tikzpicture}
     \\ 
     \hline
\begin{tikzpicture}[main/.style = {draw, circle}, scale = 1.5]
\node at (0,-1/2) {The submodule lattice for $Y_2^{( 48,25 )\pm 12}$};

\node[main] (0) at (0,0) {};
\node[main] (1) at (0,1) {};

\draw[->] (0) edge ["$12$"', pos = 0.5] (1);

\end{tikzpicture}
     &
\begin{tikzpicture}[main/.style = {draw, circle}, scale = 1.5]
\node at (0,-1/2) {The submodule lattice for $Y_2^{( 48,25 )\pm 18}$};

\node[main] (0) at (0,0) {};
\node[main] (1) at (0,1) {};
\node[main] (2) at (0,2) {};
\node[main] (3) at (0,3) {};

\draw[->] (0) edge ["$22$"', pos = 0.5] (1);
\draw[->] (1) edge ["$18$"', pos = 0.5] (2);
\draw[->] (2) edge ["$22$"', pos = 0.5] (3);

\end{tikzpicture}
     &
\begin{tikzpicture}[main/.style = {draw, circle}, scale = 1.5]
\node at (0,-1/2) {The submodule lattice for $Y_2^{( 48,25 )\pm 20}$};

\node[main] (0) at (0,0) {};
\node[main] (1) at (0,1) {};

\draw[->] (0) edge ["$20$"', pos = 0.5] (1);

\end{tikzpicture}
     &
\begin{tikzpicture}

\end{tikzpicture}
     \\
     \hline
\end{tabular}
}

\end{example}

\newpage
\printbibliography

@book{James1978,
Author = {James, G.D.},
ISBN = {3-540-08948-9},
Publisher = {Springer, Berlin},
Series = {Lecture Notes in Mathematics, 682.},
Title = {The Representation Theory of the Symmetric Groups},
Year = {1978},
}

@article{JamesMathas1999,
author = {James, G.D. and Mathas, A.},
title = {The Irreducible Specht Modules in Characteristic 2},
journal = {Bulletin of the London Mathematical Society},
volume = {31},
number = {4},
pages = {457-462},
year = {1999}
}

@article{murphy1980decomposability,
  title={On Decomposability of Some Specht Modules for Symmetric Groups},
  author={Murphy, G.},
  journal={Journal of Algebra},
  volume={66},
  number={1},
  pages={156--168},
  year={1980}
}

@article{murphy1982submodule,
  title={Submodule Structure of Some Specht Modules},
  author={Murphy, G.},
  journal={Journal of Algebra},
  volume={74},
  number={2},
  pages={524--534},
  year={1982},
  publisher={Academic Press}
}

@article{kapadia2024submodule,
  title={On the Submodule Structure of Hook Specht Modules in Characteristic 2},
  author={Kapadia, Z.A. },
  journal={arXiv preprint arXiv:2405.02039},
  year={2024}
}

@article{donkin1987schur,
  title={On Schur algebras and related algebras, II},
  author={Donkin, S.},
  journal={Journal of Algebra},
  volume={111},
  number={2},
  pages={354--364},
  year={1987},
  publisher={Academic Press}
}

@article{elkin2020young,
  title={Young Modules and Schur algebras},
  author={Elkin, M. and Webb, P.},
  journal={arXiv preprint arXiv:2011.12357},
  year={2020}
}

@article{DonkinGeranios20201,
title = {Decompositions of some Specht modules I},
journal = {Journal of Algebra},
volume = {550},
pages = {1-22},
year = {2020},
issn = {0021-8693},
doi = {https://doi.org/10.1016/j.jalgebra.2019.12.017},
url = {https://www.sciencedirect.com/science/article/pii/S0021869320300168},
author = {Donkin, S. and Geranios, H.},
}

@article{fayers_2002_schur_subalgebras_II,
  title={Schur subalgebras II},
  author={Fayers, M.},
  journal={Journal of Algebra},
  volume={252},
  number={2},
  pages={300--321},
  year={2002},
  publisher={New York: Academic Press,[1964-}
}

@article{erdmann2001young,
  title={Young modules for symmetric groups},
  author={Erdmann, K.},
  journal={Journal of the Australian Mathematical Society},
  volume={71},
  number={2},
  pages={201--210},
  year={2001},
  publisher={Cambridge University Press}
}

@book{mathas1999iwahori,
  title={Iwahori-Hecke algebras and Schur algebras of the symmetric group},
  author={Mathas, A.},
  volume={15},
  year={1999},
  publisher={American Mathematical Soc.}
}

@book{martin1993schur,
  title={Schur algebras and representation theory},
  author={Martin, S.},
  number={112},
  year={1993},
  publisher={Cambridge University Press}
}

@inproceedings{green2006polynomial,
  title={Polynomial representations of GLn},
  author={Green, J. A.},
  booktitle={Algebra Carbondale 1980: Lie Algebras, Group Theory, and Partially Ordered Algebraic Structures Proceedings of the Southern Illinois Algebra Conference, Carbondale, April 18 and 19, 1980},
  pages={124--140},
  year={2006},
  organization={Springer}
}
\end{document}